\newtheorem{theorem}{Theorem}[section]
\newtheorem{proposition}[theorem]{Proposition}
\theoremstyle{definition}
\newtheorem{definition}[theorem]{Definition}
\newtheorem{fact}[theorem]{Fact}
\newcommand{\PA}{\mathbf{PA}}
\newcommand{\K}{\mathbf{K}}
\numberwithin{equation}{section}
\title{G\"{o}del's incompleteness theorem and the Anti-Mechanist Argument: revisited}
\author{Yong Cheng}
\address{School of Philosophy, Wuhan University, China}
\email{world-cyr@hotmail.com}
\thanks{This paper is the research result of the Humanities and Social Sciences of Ministry of Education Planning Fund project ``Research on G\"{o}del's incompleteness theorem" (project no: 17YJA72040001).
I would like to thank the fund support by the Humanities and Social Sciences of Ministry of Education Planning Fund in China. I would like to thank Peter Koellner for sending me his  drafts of \cite{Peter 18, Peter 18 second} prior to publication, and the referee for detailed helpful comments.}
\subjclass[2000]{03F40, 03A05, 00A30}
\keywords{G\"{o}del's incompleteness theorem, The Anti-Mechanist Argument, G\"{o}del's Disjunctive Thesis, Intensionality}
\begin{document}

\begin{abstract}
This is a paper for a special issue of the journal ``Studia Semiotyczne" devoted to Stanislaw Krajewski's paper \cite{Krajewski 2019}. This paper gives some supplementary notes to Krajewski's   \cite{Krajewski 2019} on the Anti-Mechanist Arguments based on G\"{o}del's incompleteness theorem. In Section 3, we give some additional explanations to Section 4-6 in  Krajewski's \cite{Krajewski 2019} and  classify some misunderstandings of G\"{o}del's incompleteness theorem related to Anti-Mechanist Arguments. In Section 4 and 5, we give a more detailed  discussion of G\"{o}del's Disjunctive Thesis, G\"{o}del's Undemonstrability of  Consistency Thesis and the definability of natural numbers as in Section 7-8 in  Krajewski's \cite{Krajewski 2019}, describing how recent advances bear on these issues.
\end{abstract}

\maketitle

\section{Introduction}

G\"{o}del's incompleteness theorem is one of
the most remarkable and profound discoveries in the 20th century, an important milestone in the history of modern logic. G\"{o}del's incompleteness theorem has wide and profound influence on the development of logic, philosophy, mathematics, computer science and other fields, substantially shaping mathematical logic as well as foundations and philosophy of
mathematics from 1931 onward. The impact of G\"{o}del's incompleteness theorem is not confined to the community of mathematicians and logicians, and it has  been very popular and widely used outside mathematics.

G\"{o}del's incompleteness theorem raises a number of
philosophical questions concerning the
nature of mind and machine, the difference between human intelligence and machine intelligence, and the limit of machine intelligence. It is well known that Turing proposed a convincing analysis of the
vague and informal notion of ``computable" in terms of the precise mathematical notion of ``computable by a Turing machine". So we can replace the vague notion of computation with the mathematically precise notion of a Turing machine. In this paper, following Koellner in \cite{Peter 18}, we stipulate that the notion  ``the mind cannot be mechanized" means that  the mathematical outputs of the idealized human mind outstrip the mathematical outputs of any Turing  machine.\footnote{In this paper, we will not  consider the
performance of actual human minds, with their limitations and defects; but only consider the idealized human mind and look  at what it can do in principle (see \cite{Peter 18}, p. 338.).}
A popular interpretation of G\"{o}del's first incompleteness theorem ($\sf G1$) is that $\sf G1$ implies that the mind cannot be mechanized.
The Mechanistic Thesis claims that the mind can be mechanized.
In this paper, we will not examine the broad question of whether the mind can be mechanized, which has been extensively discussed in the literature (e.g.~ Penrose \cite{Penrose 89}, Chalmers \cite{Chalmers 95}, Lucas \cite{Lucas 96}, Lindstr\"{o}m \cite{Per 06}, Feferman \cite{Feferman 2009},  Shapiro \cite{Incompleteness, Shapiro Mechanism},  Koellner \cite{Peter 16, Peter 18, Peter 18 second}  and Krajewski \cite{Krajewski 2019}). Instead we will only examine the question of whether $\sf G1$ implies that the mind cannot be mechanized.

This is a paper for a special issue of Semiotic Studies devoted to Krajewski's paper \cite{Krajewski 2019}. We first give a summary of Krajewski's work in  \cite{Krajewski 2019}. In \cite{Krajewski 2019},  Krajewski gave a detailed analysis  of the alleged proof of the non-mechanical, or non-computational, character of the human mind based on G\"{o}del's incompleteness theorem. Following G\"{o}del himself and other leading logicians, Krajewski refuted the Anti-Mechanist Arguments (the Lucas Argument and the Penrose Argument), and  claimed  that  they are not implied by G\"{o}del's incompleteness theorem alone.
Moreover, Krajewski \cite{Krajewski 2019} demonstrated the inconsistency of Lucas's arithmetic and the semantic inadequacy of Penrose's arithmetic. Krajewski \cite{Krajewski 2019} also discussed two  consequences of G\"{o}del's incompleteness theorem directly related to Anti-Mechanist Arguments: our consistency is not provable (G\"{o}del's Undemonstrability of  Consistency Thesis), and we cannot define the natural numbers.
The discussion in Krajewski's paper is mainly from the philosophical perspective. However, the discussion in this paper is mainly from the logical perspective based on some recent advances on the study of G\"{o}del's incompleteness theorem and G\"{o}del's Disjunctive Thesis. Basically, we agree with Krajewski's analysis of the Anti-Mechanist Arguments and his conclusion that G\"{o}del's incompleteness theorem alone does not imply that the Anti-Mechanist Arguments hold. However, some discussions in  \cite{Krajewski 2019} are vague. Moreover, in the recent work on G\"{o}del's Disjunction Thesis one finds precise versions which can actually be \emph{proved}. The motivation of this paper is to give some supplementary notes to Krajewski's recent paper  \cite{Krajewski 2019} on the Anti-Mechanist Arguments  based on G\"{o}del's incompleteness theorem.

This paper is structured as follows. In Section 2, we review some notions and facts we will use in this paper. In Section 3, we give some supplementary notes to Section 5-6 in  Krajewski's \cite{Krajewski 2019} and classify some misunderstandings of G\"{o}del's incompleteness theorem related to Anti-Mechanist Arguments. In Section 4, we give a more detailed  discussion of  G\"{o}del's Disjunctive Thesis as in Section 7 in Krajewski's \cite{Krajewski 2019} based on recent advances of the study on G\"{o}del's Disjunctive Thesis in the literature. In Section 5, we give a more precise discussion of G\"{o}del's Undemonstrability of  Consistency Thesis and the definability of natural numbers as in Section 8 in  Krajewski's paper.

\section{Preliminaries}\label{pre sec}

In this section, we review some basic notions and facts used in this paper. Our notations are standard.  For textbooks on G\"{o}del's incompleteness theorem, we refer to \cite{Enderton 2001, metamathematics, Per 97, Smith 2007, Boolos 93}. There are some good survey papers on G\"{o}del's incompleteness theorem in the literature (see \cite{Smorynski 1977, Beklemishev 45, Kotlarski 2004, Visser 16, Cheng 19-2}).

In this paper, we focus on first order theory based on countable language, and  always assume the arithmetization of the base theory with a recursive set of non-logical constants. For a given theory $T$, we use \emph{$L(T)$} to denote  the language of $T$. For more details about arithmetization, we refer to \cite{metamathematics}.
Under the arithmetization, any formula or finite sequence of formulas  can be coded by a natural number (called the G\"{o}del number of the syntactic item).
In  this paper,  \emph{$\ulcorner\phi\urcorner$} denotes the numeral
representing the G\"{o}del number of $\phi$.

We say a set of sentences $\Sigma$  is
\emph{recursive} if the set of G\"{o}del numbers of sentences in $\Sigma$ is recursive.\footnote{For ease of exposition, we will pass back and forth between the two.} A theory $T$ is
\emph{decidable} if the set of sentences provable in $T$ is recursive; otherwise it is
\emph{undecidable}. A theory $T$ is
\emph{recursively axiomatizable} if it has a recursive set of axioms, i.e. the set of G\"{o}del numbers of axioms of $T$ is recursive. A theory $T$ is \emph{finitely axiomatizable} if it has a finite set of axioms.
A theory $T$ is
\emph{essentially undecidable} iff any recursively axiomatizable consistent extension of $T$ in the same language is undecidable.
We say a sentence $\phi$ is
\emph{independent} of $T$  if $T\nvdash \phi$ and $T\nvdash \neg\phi$. A theory $T$ is
\emph{incomplete} if there is a sentence $\phi$ in $L(T)$ which is independent of $T$; otherwise, $T$ is
\emph{complete} (i.e., for any sentence $\phi$ in $L(T)$, either $T\vdash\phi$ or $T\vdash \neg\phi$).
Informally, an interpretation of a theory $T$ in a theory $S$ is a mapping from formulas
of $T$ to formulas of $S$ that maps all axioms of $T$ to
sentences provable in $S$. If $T$ is interpretable in $S$, then all sentences provable (refutable) in $T$ are mapped, by the interpretation function, to sentences provable (refutable) in $S$. Interpretability can be accepted as a measure of strength of different theories. For the precise
definition of interpretation, we refer to \cite{Visser 11} for more details.

\begin{theorem}[\cite{undecidable}, Theorem 7, p. 22]\label{interpretable theorem}
Let $T_1$ and $T_2$ be two consistent theories such that $T_2$ is interpretable in $T_1$. If $T_2$ is essentially undecidable, then $T_1$ is also essentially undecidable.
\end{theorem}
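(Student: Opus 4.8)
The plan is to prove the contrapositive-flavored statement directly by transferring decision procedures across the interpretation. Suppose toward a contradiction that $T_1$ is \emph{not} essentially undecidable. Then, by definition, $T_1$ has a consistent recursively axiomatizable extension $T_1'$ in the language $L(T_1)$ that is \emph{decidable}. The goal is to manufacture from $T_1'$ a consistent recursively axiomatizable decidable extension of $T_2$ in $L(T_2)$, which contradicts the essential undecidability of $T_2$. The interpretation $\tau$ of $T_2$ in $T_1$ is the bridge: it is also an interpretation of $T_2$ in $T_1'$, since $T_1'$ proves everything $T_1$ does.

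The key construction I would carry out is the following. Let $\tau$ translate each $L(T_2)$-formula $\phi$ to an $L(T_1)$-formula $\phi^\tau$. Define $S = \{\phi \in L(T_2) : T_1' \vdash \phi^\tau\}$, the set of $L(T_2)$-sentences whose translations are theorems of $T_1'$. I claim $S$ is the desired extension of $T_2$. First, $S$ is a \emph{theory} extending $T_2$: since $\tau$ is an interpretation of $T_2$, every axiom of $T_2$ translates to a theorem of $T_1$ (hence of $T_1'$), so $T_2 \subseteq S$; and $S$ is deductively closed because the interpretation commutes with logical consequence (if $S \vdash \psi$ then the translations of the premises prove $\psi^\tau$ in $T_1'$, using that $\tau$ respects the logical connectives and quantifiers up to provability). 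Second, $S$ is \emph{consistent}: if $S \vdash \bot$ then $T_1' \vdash \bot^\tau$, and since an interpretation sends refutable sentences to refutable sentences, this would contradict the consistency of $T_1'$.

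The crucial point is \emph{decidability} of $S$. To decide whether $\phi \in S$, one computes the translation $\phi^\tau$ — this is effective because the interpretation function is given by a recursive syntactic transformation — and then asks whether $T_1' \vdash \phi^\tau$, which is answerable because $T_1'$ is decidable. Composing a recursive translation with a recursive decision procedure yields a recursive decision procedure for $S$, so $S$ is decidable. But $S$ is a consistent decidable extension of $T_2$ in $L(T_2)$; since a decidable theory is recursively axiomatizable (it axiomatizes itself), this contradicts the assumption that $T_2$ is essentially undecidable. Hence $T_1$ must be essentially undecidable.

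I expect the main obstacle to lie in verifying the formal properties of the translated theory $S$ — specifically, that $S$ is deductively closed and that the interpretation genuinely transports provability and refutability as claimed. These facts depend on the precise definition of interpretation (including how relativization to a domain predicate and the interpretation of equality are handled), and making them rigorous requires the standard lemma that for an interpretation $\tau$ of $T_2$ in $T_1'$, one has $T_2 \vdash \phi$ implies $T_1' \vdash \phi^\tau$. Once that transfer lemma is in hand, the decidability argument is a routine composition of recursive procedures; the only genuinely delicate bookkeeping is confirming that the syntactic translation is itself recursive and that consistency is preserved, both of which follow from the basic theory of interpretations.
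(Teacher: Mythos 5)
Your proposal is correct, and it is worth noting that the paper itself gives no proof of this statement---it is quoted verbatim from Tarski--Mostowski--Robinson (\emph{Undecidable Theories}, Theorem 7), so the only comparison available is with the original source. Your argument is the standard contrapositive ``pull-back'' proof: from a consistent decidable extension $T_1'$ of $T_1$ you form $S=\{\phi: T_1'\vdash\phi^{\tau}\}$ and check that $S$ is a consistent, deductively closed, decidable extension of $T_2$ in $L(T_2)$, contradicting essential undecidability (using, as you note, that a decidable theory is recursively axiomatizable by taking its own theorems as axioms, which matters because the paper's definition of essential undecidability quantifies over recursively axiomatizable extensions). This differs from the route in Tarski--Mostowski--Robinson, which argues ``forward'': they pass to a common theory $T$ in the expanded language, obtained from (an inessential extension of) $T_1$ by adding the finitely many definitions underlying the interpretation, observe that $T$ is a consistent extension of $T_2$ and hence essentially undecidable by their earlier transfer theorems for extensions and for addition of finitely many axioms, and then push essential undecidability back down to $T_1$. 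Their decomposition isolates reusable lemmas (undecidability transfers under consistent extension, finite extension, and definitional extension), whereas your argument is more self-contained and fits the paper's presentation of an interpretation as a formula translation $\phi\mapsto\phi^{\tau}$. The one point you rightly flag as needing care---that $S$ is deductively closed and consistency transfers---rests on the translation commuting with first-order derivability (relativization to a provably nonempty, provably closed domain, and $(\neg\phi)^{\tau}=\neg(\phi^{\tau})$), which is exactly the property the paper asserts when it says interpretations map provable (refutable) sentences to provable (refutable) sentences; with that standard lemma in hand your proof is complete.
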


Robinson Arithmetic $\mathbf{Q}$ was introduced in \cite{undecidable} by Tarski, Mostowski and
Robinson  as a base axiomatic theory for investigating incompleteness and undecidability.

\begin{definition}\label{def of Q}
Robinson Arithmetic $\mathbf{Q}$  is  defined in   the language $\{\mathbf{0}, \mathbf{S}, +, \cdot\}$ with the following axioms:
\begin{description}
  \item[$\mathbf{Q}_1$] $\forall x \forall y(\mathbf{S}x=\mathbf{S} y\rightarrow x=y)$;
  \item[$\mathbf{Q}_2$] $\forall x(\mathbf{S} x\neq \mathbf{0})$;
  \item[$\mathbf{Q}_3$] $\forall x(x\neq \mathbf{0}\rightarrow \exists y (x=\mathbf{S} y))$;
  \item[$\mathbf{Q}_4$]  $\forall x\forall y(x+ \mathbf{0}=x)$;
  \item[$\mathbf{Q}_5$] $\forall x\forall y(x+ \mathbf{S} y=\mathbf{S} (x+y))$;
  \item[$\mathbf{Q}_6$] $\forall x(x\cdot \mathbf{0}=\mathbf{0})$;
  \item[$\mathbf{Q}_7$] $\forall x\forall y(x\cdot \mathbf{S} y=x\cdot y +x)$.
\end{description}
\end{definition}

The theory $\mathbf{PA}$ consists of axioms $\mathbf{Q}_1$-$\mathbf{Q}_2$, $\mathbf{Q}_4$-$\mathbf{Q}_7$ in Definition \ref{def of Q} and the following axiom scheme of induction:
\[(\phi(\mathbf{0})\wedge \forall x(\phi(x)\rightarrow \phi(\mathbf{S} x)))\rightarrow \forall x  \phi(x),\]
where $\phi$ is a formula with at least one free variable $x$.

Let $\mathfrak{N}=\langle\mathbb{N}, +, \times\rangle$ denote the standard model of $\mathbf{PA}$. We say $\phi\in L(\mathbf{PA})$ is a true sentence of arithmetic if $\mathfrak{N}\models\phi$.
We define that $\mathit{Th(\mathbb{N}, +, \cdot)}$ is the set of sentence $\phi$ in $L(\mathbf{PA})$ such that $\mathfrak{N}\models\phi$. Similarly, we have the definition of $\mathit{Th(\mathbb{Z}, +, \cdot)}$, $\mathit{Th(\mathbb{Q}, +, \cdot)}$ and $\mathit{Th(\mathbb{R}, +, \cdot)}$.

We introduce a hierarchy of $L(\mathbf{PA})$-formulas called the \emph{arithmetical hierarchy} (see \cite{metamathematics, Metamathematics of First-Order
Arithmetic}). \emph{Bounded formulas}   ($\Sigma^0_0$, or $\Pi^0_0$, or $\Delta^0_0$ formula) are built from atomic
 formulas using only propositional connectives and bounded quantifiers (in the form $\forall x\leq y$ or $\exists x\leq y$). A formula is  $\Sigma^0_{n+1}$ if it has the
form $\exists x\phi$ where $\phi$ is $\Pi^0_{n}$. A formula is $\Pi^0_{n+1}$ if it has the form $\forall x\phi$ where $\phi$ is $\Sigma^0_{n}$. Thus, a $\Sigma^0_{n}$-formula has a block of $n$ alternating quantifiers, the first one
being existential, and this block is followed by a bounded formula. Similarly
for $\Pi^0_{n}$-formulas. A formula is $\Delta^0_n$ if it is equivalent to both a $\Sigma^0_{n}$ formula and a $\Pi^0_{n}$ formula.

A theory $T$ is said to be \emph{$\omega$-consistent} if there is no formula $\phi(x)$ such that $T \vdash\exists x \phi(x)$ and for any $n\in\omega$, $T \vdash\neg\phi(\bar{n})$. A theory $T$ is \emph{$1$-consistent} if there is no such formula $\phi(x)$ which is $\Delta^0_1$. A theory $T$ is
\emph{sound} iff for any formula $\phi$, if $T\vdash\phi$, then $\mathfrak{N}\models\phi$; a theory $T$ is
\emph{$\Sigma^0_1$-sound} iff for any $\Sigma^0_1$ formula $\phi$, if $T\vdash\phi$, then $\mathfrak{N}\models\phi$.

%Let $\langle W_e: e\in\mathbb{N}\rangle$ be the list of recursive enumerable subsets of $\mathbb{N}$ where $W_e=\{n\in\mathbb{N}: \phi_e(n)\downarrow\}$.

In the following, unless stated otherwise, let $T$ be a recursively axiomatizable consistent extension of $\mathbf{PA}$. There is a formal arithmetical formula $\mathbf{Proof}_{T}(x,y)$ (called G\"{o}del's proof predicate) which represents the recursive relation $Proof_{T}(x,y)$ saying that $y$ is the G\"{o}del number of a proof in $T$ of the formula with G\"{o}del number $x$. Since we will discuss general provability predicates based on proof predicates, now we give a general definition of proof predicate which is a generalization of properties of G\"{o}del's proof predicate $\mathbf{Proof}_{T}(x,y)$.

\begin{definition}\label{def of proof predicate}
We say a formula $\mathbf{Prf}_T(x, y)$ is a proof predicate of $T$ if it satisfies the following conditions:\footnote{We can say that each proof predicate represents the relation ``$y$ is the code of a proof in $T$ of a formula with G\"{o}del number $x$".}

\begin{enumerate}[(1)]
  \item $\mathbf{Prf}_T(x, y)$ is $\Delta^0_1(\mathbf{PA})$;\footnote{We say a formula $\phi$ is $\Delta^0_1(\mathbf{PA})$ if there exists a $\Sigma^0_1$ formula $\alpha$ such that $\mathbf{PA}\vdash \phi\leftrightarrow\alpha$, and there exists a $\Pi^0_1$ formula $\beta$ such that $\mathbf{PA}\vdash \phi\leftrightarrow\beta$.}
  \item $\mathbf{PA} \vdash \forall x(\mathbf{Prov}_T(x) \leftrightarrow\exists y \mathbf{Prf}_T(x, y))$;
  \item  for any $n \in\omega$ and formula $\phi, \mathbb{N}\models \mathbf{Proof}_T(\ulcorner\phi\urcorner, \overline{n}) \leftrightarrow \mathbf{Prf}_T(\ulcorner\phi\urcorner, \overline{n})$;
      \item $\mathbf{PA} \vdash \forall x\forall x^{\prime} \forall y (\mathbf{Prf}_T(x, y) \wedge \mathbf{Prf}_T(x^{\prime}, y) \rightarrow x = x^{\prime})$.
\end{enumerate}
\end{definition}

We  define the  provability predicate $\mathbf{Pr}_T(x)$ from a proof predicate $\mathbf{Prf}_T(x,y)$ by $\exists y\, \mathbf{Prf}_T(x,y)$, and   the  consistency statement $\mathbf{Con}(T)$ from a provability predicate $\mathbf{Pr}_T(x)$ by $\neg \mathbf{Pr}_T (\ulcorner \mathbf{0}\neq\mathbf{0}\urcorner)$.

\begin{description}
  \item[D1] If $T \vdash\phi$, then $T \vdash \mathbf{Pr}_T(\ulcorner\phi\urcorner)$;
  \item[D2] If $T \vdash \mathbf{Pr}_T(\ulcorner\phi \rightarrow\varphi\urcorner) \rightarrow (\mathbf{Pr}_T(\ulcorner\phi\urcorner)\rightarrow \mathbf{Pr}_T(\ulcorner\varphi\urcorner))$;
  \item[D3] $T \vdash \mathbf{Pr}_T(\ulcorner\phi\urcorner)\rightarrow \mathbf{Pr}_T(\ulcorner \mathbf{Pr}_T(\ulcorner\phi\urcorner)\urcorner)$.
\end{description}
$\mathbf{D1}$-$\mathbf{D3}$ is called the Hilbert-Bernays-L\"{o}b derivability condition.
Note that $\mathbf{D1}$ holds for any provability predicate $\mathbf{Pr}_T(x)$. We say that provability predicate $\mathbf{Pr}_T(x)$ is  \emph{standard} if it satisfies  $\mathbf{D2}$ and $\mathbf{D3}$.
In this paper, unless stated otherwise, we assume that $\mathbf{Con}(T)$ is the canonical arithmetic sentence expressing  the consistency of $T$ and $\mathbf{Con}(T)$ is formulated via a standard provability predicate.

The reflection principle for $T$, denoted by \emph{$\mathbf{Rfn}_T$}, is the schema $\mathbf{Pr}_T(\ulcorner\phi\urcorner)\rightarrow\phi$ for
every sentence $\phi$ in $L(T)$. The reflection principle for $T$ restricted to a
class of sentences $\Gamma$ will be denoted by \emph{$\Gamma$-$\mathbf{Rfn}_T$}.

Let $\alpha(x)$ be a formula in $L(T)$. We can similarly define the provability predicate and consistency statement w.r.t.~ formula $\alpha(x)$ as follows.
Define the formula $\mathbf{Prf}_{\alpha}(x,y)$ saying ``$y$ is the G\"{o}del number of a proof of the formula with G\"{o}del number $x$
from the set of all sentences satisfying $\alpha(x)$".
Define the provability predicate $\mathbf{Pr}_{\alpha}(x)$ of $\alpha(x)$ as $\exists y \mathbf{Prf}_{\alpha}(x,y)$ and the consistency statement
$\mathbf{Con}_{\alpha}(T)$ as $\neg \mathbf{Pr}_{\alpha}(\ulcorner \mathbf{0}\neq \mathbf{0}\urcorner)$.
We say that formula $\alpha(x)$ is a \emph{numeration} of $T$ if for any $n$, $T \vdash \alpha(\overline{n})$ iff $n$ is the G\"{o}del number of some sentence in $T$.

\section{Some notes on G\"{o}del-based Anti-Mechanist Arguments}

There has been a massive amount of literature on the Anti-Mechanist Arguments due primarily to
Lucas and Penrose (see Lucas \cite{Lucas 61}, Penrose \cite{Penrose 89}) which claim that $\sf G1$ shows that the human mind cannot be mechanized. The Anti-Mechanist Argument began with Nagel and Newman in \cite{Godels Proof 2}  and continued with Lucas's  publication in \cite{Lucas 61}.
Nagel and Newman's argument was criticized by Putnam in \cite{Minds and machines} and earlier by G\"{o}del (see Feferman \cite{Feferman 2009}), while Lucas's argument was much more widely criticized in the literature. See Feferman \cite{Feferman 2009} for a historical
account and  Benacerraf \cite{Benacerraf 67} for an influential criticism of Lucas.
Penrose proposed a new argument for the Anti-Mechanist Argument in \cite{Penrose 94, Penrose 11}.
Penrose's new argument is the most sophisticated
and promising Anti-Mechanist Argument  which has been extensively discussed and carefully analyzed in the literature (see Chalmers \cite{Chalmers 95}, Feferman \cite{Feferman 1995}, Lindstr\"{o}m \cite{Per 01, Per 06}, and Shapiro \cite{Incompleteness, Shapiro Mechanism},  Gaifman \cite{Gaifman 2000} and Koellner \cite{Peter 16,Peter 18,Peter 18 second}, etc).

Most philosophers and logicians believe that variants of the arguments of Lucas and Penrose are not fully convincing.
However, they do not agree so well on what is wrong with arguments of Lucas and Penrose. One strength of Krajewski's paper \cite{Krajewski 2019} is that it provides a detailed review of the history of Anti-Mechanist Arguments based on  G\"{o}del's incompleteness theorem (see Section 3 in \cite{Krajewski 2019}) and an analysis of these G\"{o}del-based Anti-Mechanist Arguments (e.g.~ Lucas's argument in Section 4 and Penrose's argument in Section 6 in \cite{Krajewski 2019}). In this section, based on Krajewski's work, we give some supplementary notes of Krajewski's Section 5-6 in \cite{Krajewski 2019}.

For us, the G\"{o}del-based Anti-Mechanist Argument comes from some misinterpretations of G\"{o}del's incompleteness theorem. To understand the source of these misinterpretations or illusions, we should first have correct interpretations of G\"{o}del's incompleteness theorem. In the following, we first review some important facts about G\"{o}del's incompleteness theorem which are helpful to clarify some misinterpretations of G\"{o}del's incompleteness theorem.

G\"{o}del proved his incompleteness theorem in \cite{Godel 1931 original proof} for a certain formal
system $\mathbf{P}$ related to Russell-Whitehead's Principia Mathematica and based on the
simple theory of types over the natural number series and the Dedekind-Peano
axioms (see \cite{Beklemishev 45}, p. 3).  G\"{o}del's original first incompleteness theorem (\cite[Theorem VI]{Godel 1931 original proof}) says that for
formal theory $T$ formulated in the language of $\mathbf{P}$ and obtained by adding a primitive recursive set of axioms to the system $\mathbf{P}$, if $T$ is $\omega$-consistent, then $T$ is incomplete.
The following theorem is a modern reformulation of G\"{o}del's first incompleteness theorem.

\begin{theorem}[G\"{o}del's first incompleteness theorem $(\sf G1)$]~
If $T$ is a recursively axiomatized extension of $\mathbf{PA}$, then there exists a G\"{o}del sentence $\mathbf{G}$ such that:
\begin{enumerate}[(1)]
  \item if $T$ is consistent, then $T\nvdash \mathbf{G}$;
  \item if $T$ is $\omega$-consistent, then $T\nvdash \neg\mathbf{G}$.
\end{enumerate}
\end{theorem}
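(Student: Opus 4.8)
The plan is to follow the standard self-referential construction, building $\mathbf{G}$ as a sentence that asserts its own unprovability in $T$ and then reading off each half of the theorem from the fixed-point equivalence. First I would invoke the Diagonal (fixed-point) Lemma: since $T$ is a recursively axiomatized extension of $\mathbf{PA}$, arithmetization supplies a $\Delta^0_1(\mathbf{PA})$ proof predicate $\mathbf{Prf}_T(x,y)$ and the associated provability predicate $\mathbf{Pr}_T(x)=\exists y\,\mathbf{Prf}_T(x,y)$, and applied to $\psi(x):=\neg\mathbf{Pr}_T(x)$ the lemma yields a sentence $\mathbf{G}$ with
\[ T \vdash \mathbf{G} \leftrightarrow \neg\mathbf{Pr}_T(\ulcorner\mathbf{G}\urcorner). \]
Intuitively, $\mathbf{G}$ says ``I am not provable in $T$.'' The second ingredient I would record at the outset is representability: because $\mathbf{Prf}_T$ is $\Delta^0_1$ and $\mathbf{PA}$ is $\Sigma^0_1$-complete, for each fixed $\phi$ and $n$ we have that $\mathbb{N}\models\mathbf{Prf}_T(\ulcorner\phi\urcorner,\overline{n})$ implies $T\vdash\mathbf{Prf}_T(\ulcorner\phi\urcorner,\overline{n})$, and likewise $\mathbb{N}\models\neg\mathbf{Prf}_T(\ulcorner\phi\urcorner,\overline{n})$ implies $T\vdash\neg\mathbf{Prf}_T(\ulcorner\phi\urcorner,\overline{n})$.

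For part (1), I would assume $T$ is consistent and suppose toward a contradiction that $T\vdash\mathbf{G}$. By the derivability condition $\mathbf{D1}$ (which holds for every provability predicate), $T\vdash\mathbf{Pr}_T(\ulcorner\mathbf{G}\urcorner)$. But the fixed-point equivalence gives $T\vdash\mathbf{G}\rightarrow\neg\mathbf{Pr}_T(\ulcorner\mathbf{G}\urcorner)$, so from $T\vdash\mathbf{G}$ we also obtain $T\vdash\neg\mathbf{Pr}_T(\ulcorner\mathbf{G}\urcorner)$, contradicting consistency. Hence $T\nvdash\mathbf{G}$.

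For part (2), I would assume $T$ is $\omega$-consistent and note first that $\omega$-consistency implies consistency, so part (1) already gives $T\nvdash\mathbf{G}$. Suppose toward a contradiction that $T\vdash\neg\mathbf{G}$. By the fixed-point equivalence $T\vdash\mathbf{Pr}_T(\ulcorner\mathbf{G}\urcorner)$, that is, $T\vdash\exists y\,\mathbf{Prf}_T(\ulcorner\mathbf{G}\urcorner,y)$. On the other hand, since $T\nvdash\mathbf{G}$, no natural number codes a $T$-proof of $\mathbf{G}$, so $\mathbb{N}\models\neg\mathbf{Prf}_T(\ulcorner\mathbf{G}\urcorner,\overline{n})$ for every $n$, and by representability $T\vdash\neg\mathbf{Prf}_T(\ulcorner\mathbf{G}\urcorner,\overline{n})$ for every $n\in\omega$. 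Taking $\phi(y):=\mathbf{Prf}_T(\ulcorner\mathbf{G}\urcorner,y)$, this is precisely a witness to $\omega$-inconsistency, contradicting the hypothesis. Hence $T\nvdash\neg\mathbf{G}$.

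I expect the main obstacle to lie in the two facts I am assuming rather than in the short syntactic arguments above: the Diagonal Lemma and the representability/$\Sigma^0_1$-completeness step. The diagonal construction requires a careful arithmetization of substitution so that the intended self-reference is genuinely realized by a single arithmetical sentence, and the representability step rests on $\mathbf{PA}$ proving all true $\Sigma^0_1$ sentences (which then propagates to $T\supseteq\mathbf{PA}$). Once these are in place, both clauses follow immediately; the one subtlety worth flagging is that clause (2) genuinely needs $\omega$-consistency rather than mere consistency, since without it $T$ could prove $\mathbf{Pr}_T(\ulcorner\mathbf{G}\urcorner)$ while being unable to exhibit any numeral witness, which is exactly the loophole that $\omega$-consistency closes.
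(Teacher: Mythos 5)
Your proof is correct and is exactly the canonical diagonalization argument: the paper itself states $\sf G1$ as a known reformulation without supplying a proof (deferring to the textbook literature it cites), and your route via the Diagonal Lemma, $\mathbf{D1}$, $\Sigma^0_1$-completeness for the $\Delta^0_1$ proof predicate, and the $\omega$-consistency witness $\phi(y):=\mathbf{Prf}_T(\ulcorner\mathbf{G}\urcorner,y)$ is precisely the standard proof the paper presupposes. Your closing remark correctly identifies why clause (2) needs $\omega$-consistency (or, as the paper notes, the weaker $1$-consistency/$\Sigma^0_1$-soundness would also suffice) rather than mere consistency.
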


Thus if $T$ is $\omega$-consistent, then $\mathbf{G}$ is independent of $T$ and hence
$T$ is incomplete.
If $T$ is consistent, G\"{o}del sentence $\mathbf{G}$ is a true $\Pi^0_1$ sentence of arithmetic. G\"{o}del's proof of $\sf G1$ is constructive: one can effectively find a true $\Pi^0_1$ sentence $\mathbf{G}$ of arithmetic such that $\mathbf{G}$ is independent of $T$ assuming $T$ is $\omega$-consistent. G\"{o}del\, calls this the ``incompletability or inexhaustability of mathematics".
Note that only assuming that $T$ is consistent, we can show that $\mathbf{G}$ is a true sentence of arithmetic unprovable in $T$. But it is not enough to show that $T\nvdash\neg\mathbf{G}$ only assuming that $T$ is consistent.
To show that $T\nvdash\neg\mathbf{G}$, we need a stronger condition such as ``$T$ is 1-consistent" or ``$T$ is $\Sigma^0_1$-sound".

Let $T$ be a recursively axiomatized extension of $\mathbf{PA}$. After G\"{o}del, Rosser constructed  Rosser sentence $\mathbf{R}$ (a $\Pi^0_1$ sentence) and showed that if $T$ is consistent, then  $\mathbf{R}$ is independent of $T$. Rosser improved G\"{o}del's $\sf G1$ in the sense that Rosser proved that $T$ is incomplete only assuming that ``$T$ is consistent" which is weaker than ``$T$ is 1-consistent".

In this paper, let $\langle M_n: n\in\omega\rangle$ be the list of Turing machines and $\mathit{Th(M_n)}$ be the set of sentences produced by the Turing machine $M_n$.
Let $C = \{n: \mathit{Th(M_n)}$ is a consistent theory\} and $S = \{n: \mathit{Th(M_n)}$ is a sound theory\}.
The following proposition on inconsistency and unsoundness is from \cite{Krajewski 2019}.
\begin{proposition}~\label{incon and soundness}
\begin{enumerate}[(1)]
  \item  If $F$ is a partial recursive function such that $C \subseteq dom(F)$ and $F(n)\notin \mathit{Th(M_n)}$  for any $n\in C$, then $\{F(n):n \in dom(F)\}$ is inconsistent.
  \item  If $F$ is a partial recursive function such that $S \subseteq dom(F)$ and $F(n)\notin \mathit{Th(M_n)}$ for any $n\in S$, then $\{F(n): n \in dom(F)\}$ is inconsistent.
\end{enumerate}
\end{proposition}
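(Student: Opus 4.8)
The plan is to prove both parts by a single diagonal argument resting on one observation: since $F$ is partial recursive, its range $\Phi := \{F(n) : n \in dom(F)\}$ is a recursively enumerable set of sentences, and because $\langle M_n : n \in \omega\rangle$ enumerates \emph{every} Turing machine, there is an index $m$ with $\mathit{Th}(M_m) = \Phi$. The entire argument turns on feeding this self-referential index back into $F$: by the defining property of $F$ the value $F(m)$ is barred from $\mathit{Th}(M_m)$, yet $F(m)$ belongs to $\Phi = \mathit{Th}(M_m)$ as soon as $m \in dom(F)$. No use of $\sf G1$ or of the derivability conditions is needed.

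For part (1) I would argue by contradiction, assuming $\Phi$ is consistent. Fixing $m$ with $\mathit{Th}(M_m) = \Phi$ as above, consistency of $\Phi$ means $\mathit{Th}(M_m)$ is a consistent theory, so $m \in C$. The hypothesis $C \subseteq dom(F)$ then guarantees that $F(m)$ is defined, whence $F(m) \in \Phi = \mathit{Th}(M_m)$. But the hypothesis $F(n) \notin \mathit{Th}(M_n)$ for all $n \in C$, instantiated at $n = m$, gives $F(m) \notin \mathit{Th}(M_m)$ --- a contradiction. Hence $\Phi$ is inconsistent. Everything here is elementary once the index $m$ is in hand.

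For part (2) I would run the identical diagonal with $S$ in place of $C$: were $\Phi$ \emph{sound}, its index $m$ would lie in $S \subseteq dom(F)$, and the same clash between $F(m) \in \mathit{Th}(M_m)$ and $F(m) \notin \mathit{Th}(M_m)$ would follow. I expect this soundness step, and not the diagonalization, to be the only delicate point. The asymmetry with part (1) is that soundness is strictly stronger than consistency: the bare diagonal shows directly only that $\Phi$ cannot be sound, while the stated conclusion asks for the failure of consistency, and consistency of $\Phi$ does not by itself place the index $m$ in $S$ (a consistent but unsound $\Phi$, e.g.\ one generated from a false sentence that no sound theory can output, slips through). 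To obtain inconsistency verbatim one must invoke the standing assumption of the surrounding discussion --- that the $\mathit{Th}(M_n)$ are candidate sound formalizations of arithmetic, so that for them non-soundness is the operative notion of ``semantic inadequacy'' --- or else add a hypothesis forcing a consistent $\Phi$ meeting the conditions to be enumerated by a sound machine. Modulo this point, part (2) is word-for-word part (1).
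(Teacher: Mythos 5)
Your diagonal argument for part (1) is exactly the paper's: the paper does not prove Proposition \ref{incon and soundness} directly (it is imported from Krajewski), but the generalized theorem that immediately follows it is proved by precisely your construction --- take $A=\{F(n):n\in dom(F)\}$, note it is recursively enumerable, fix $k$ with $A=\mathit{Th(M_k)}$, and derive $F(k)\in A$ and $F(k)\notin A$ simultaneously. So for part (1) your proposal is correct and coincides with the paper's route.

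Your reservation about part (2) is also well placed, and the paper's own text quietly confirms it. The generalized theorem concludes only that $\{F(n):n\in dom(F)\}$ \emph{does not have property} $P$; instantiating $P$ as soundness therefore yields ``$\Phi$ is unsound,'' not ``$\Phi$ is inconsistent.'' Indeed the proposition is introduced as being ``on inconsistency and unsoundness,'' which suggests that part (2) is intended to conclude unsoundness and that the word ``inconsistent'' there is a slip. Your implicit counterexample makes the point concrete: take $F$ to be the total constant function with value $\ulcorner\neg\mathbf{Con}(\mathbf{PA})\urcorner$. Then $S\subseteq dom(F)=\omega$, and $F(n)\notin\mathit{Th(M_n)}$ for every $n\in S$ because a sound theory contains no false sentence; yet $\{F(n):n\in dom(F)\}=\{\neg\mathbf{Con}(\mathbf{PA})\}$ is consistent (though unsound). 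So part (2) as literally stated fails, and no additional hypothesis short of changing the conclusion to unsoundness (or, as you say, forcing the enumerating machine into $S$) will rescue the stronger claim. In short: your proof of part (1) is the paper's proof, and your diagnosis of part (2) identifies a genuine imprecision in the statement rather than a gap in your own argument.
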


A natural question is: whether there exists such a function $F$ with these properties. However, the effective version of
G\"{o}del's first incompleteness theorem ($\sf EG1$) tells us that there exists a partial recursive function $F$ such that for any $n\in\omega$, if $\mathit{Th(M_n)}$ is consistent, then $F(n)$ is defined and $F(n)$ is the G\"{o}del number of a true arithmetic sentence which is not provable in $\mathit{Th(M_n)}$. Thus there exists such a function $F$ with the properties as stated in Proposition \ref{incon and soundness}.

One popular interpretation of $\sf EG1$ is: for any Turing machine $M_n, F(n)$ picks up the true sentence of arithmetic not produced by $M_n$. However, this is a misinterpretation of $\sf EG1$ which in fact says that for such a partial recursive function $F$, if $\mathit{Th(M_n)}$ is consistent, then $F(n)$ is the G\"{o}del number of a true sentence of arithmetic which is not provable in $\mathit{Th(M_n)}$. A natural question is: whether there exists an effective procedure such that we can decide whether $\mathit{Th(M_n)}$ is consistent. The answer is negative since  $C$ is a complete $\Pi^0_1$ set as Koellner points out in \cite{Peter 18}.

Krajewski \cite{Krajewski 2019} claimed that $C$ and $S$  are not recursive. However, as Krajewski \cite{Krajewski 2019} commented,  Proposition \ref{incon and soundness} on inconsistency and unsoundness  does not require that for $n\in dom(F)$, $F(n)$ is the code of a true arithmetic sentence. But
we do not see that $C$ or $S$ is not recursive from Proposition \ref{incon and soundness}. However, if we add the condition that for $n\in dom(F), F(n)\in \mathbf{Truth}\setminus \mathit{Th(M_n)}$, then we can show that $C$ and $S$ are not recursive. Let us take $C$  for example and show that $C$ is not recursive.

\begin{proposition}
$C$ is not recursive.\footnote{In fact, $C$ is a complete $\Pi^0_1$ set as Koellner points out in \cite{Peter 18}.}
\end{proposition}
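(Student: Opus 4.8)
The plan is to assume toward a contradiction that $C$ is recursive and to exhibit a single consistent, recursively axiomatized theory whose index is forced into $C$ and yet contradicts the defining property of the function furnished by $\sf EG1$. The essential input is the strengthened form of $\sf EG1$ recorded just above: there is a partial recursive $F$ such that whenever $\mathit{Th(M_n)}$ is consistent, $F(n)$ is defined, codes a \emph{true} arithmetic sentence, and that sentence is not provable in $\mathit{Th(M_n)}$; equivalently, $F(n)\in\mathbf{Truth}\setminus\mathit{Th(M_n)}$ for every $n\in C$.

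Concretely, I would form the theory $T^\ast = \mathbf{PA}\cup\{\varphi : \ulcorner\varphi\urcorner = F(n)\text{ for some }n\in C\}$, that is, $\mathbf{PA}$ together with the true sentence that $F$ attaches to each consistent machine. The first step is to verify that $T^\ast$ is recursively enumerable: using the assumed recursiveness of $C$ one enumerates the $n\in C$, and for each such $n$ the value $F(n)$ is guaranteed to halt (because $n\in C$), so the corresponding sentence can be produced effectively. Hence $T^\ast = \mathit{Th(M_m)}$ for some index $m$. The second step is to check that $T^\ast$ is consistent: every axiom of $\mathbf{PA}$ holds in $\mathfrak{N}$, and by the strengthened $\sf EG1$ each adjoined sentence is true in $\mathfrak{N}$ as well, so $T^\ast$ has $\mathfrak{N}$ as a model and is therefore consistent. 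Consequently $m\in C$.

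The contradiction then comes from the fact that $m$ itself lies in $C$. Applying the strengthened $\sf EG1$ to $m$, the sentence $\varphi_m$ with $\ulcorner\varphi_m\urcorner = F(m)$ is true and \emph{not provable} in $\mathit{Th(M_m)} = T^\ast$. But since $m\in C$, this very sentence $\varphi_m$ is by construction one of the listed axioms of $T^\ast$, so trivially $T^\ast\vdash\varphi_m$. This is impossible, and the contradiction shows that $C$ is not recursive.

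The hard part, and the only place demanding genuine care, is the interaction between the \emph{partiality} of $F$ and the recursiveness hypothesis on $C$. Since $F$ need not be total and we have no control over its behaviour on indices of inconsistent theories, a naive attempt to feed all of $\omega$ into $F$ would either fail to halt or inject uncontrolled sentences into $T^\ast$, destroying either its enumerability or its consistency. It is precisely the recursiveness of $C$ that confines the construction to indices on which $F$ is both defined and certified true by $\sf EG1$, keeping $T^\ast$ simultaneously r.e.\ and provably consistent. I would note finally that the same argument goes through verbatim with $C$ assumed only recursively enumerable, which would already yield the stronger conclusion that $C$ is not $\Sigma^0_1$ — in line with Koellner's remark, cited in the footnote, that $C$ is in fact $\Pi^0_1$-complete.
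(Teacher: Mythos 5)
Your proposal is correct and is essentially the paper's own argument: assume $C$ recursive, form the r.e.\ set of sentences $\{F(n): n\in C\}$, observe it is true hence consistent, realize it as some $\mathit{Th(M_m)}$ with $m\in C$, and derive the contradiction $F(m)\in \mathit{Th(M_m)}$ versus $F(m)\notin \mathit{Th(M_m)}$. The only (harmless) difference is that you adjoin $\mathbf{PA}$ to the theory, which the paper does not bother to do.
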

\begin{proof}\label{}
Suppose $C$ is recursive. Let $A=\{F(n): n\in C\}$. Then $A$ is recursive enumerable.
Suppose $A=\mathit{Th(M_m)}$ for some $m$. Note that $A\subseteq\textbf{Truth}$, and so $A$ is consistent. By the definition of $C, m\in C$ and hence $F(m)\in A$. But, on the other hand, $F(m)\notin \mathit{Th(M_m)}=A$ which leads to a contradiction.
\end{proof}

Since $C$ is undecidable, it is impossible to effectively distinguish the case that  $\mathit{Th(M_n)}$ is consistent and the case that $\mathit{Th(M_n)}$ is not consistent.

In fact, Theorem \ref{incon and soundness} can be generalized in the following form:

\begin{theorem}\label{}
Let $P$ be any property about first order theory (i.e.~ consistency, soundness, 1-consistency, etc). Let $C = \{n: \mathit{Th(M_n)}$ has property $P$\}. Suppose $F$ is a partial recursive function satisfying the following conditions:
\begin{enumerate}[(1)]
  \item $C \subseteq dom(F)$,
  \item for each $n\in C$, $F(n)\notin \mathit{Th(M_n)}$.
\end{enumerate}
Then, $\{F(n):n \in dom(F)\}$ does not have property $P$.
\end{theorem}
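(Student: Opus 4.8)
The plan is to argue by contradiction, reusing the diagonal idea behind the proof that $C$ is not recursive. Put $A = \{F(n) : n\in dom(F)\}$ and suppose, toward a contradiction, that $A$ has property $P$. First I would note that $A$ is recursively enumerable, being exactly the range of the partial recursive function $F$; hence there is an index $m$ with $\mathit{Th(M_m)} = A$, since every recursively enumerable set of sentences appears as $\mathit{Th(M_n)}$ for some machine in the list $\langle M_n : n\in\omega\rangle$.

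Next I would unwind the definition of $C$. Because $\mathit{Th(M_m)} = A$ is assumed to have property $P$, the defining condition for $C$ gives $m\in C$. Hypothesis (1) then yields $m\in dom(F)$, so $F(m)$ is defined and, by the definition of $A$, we have $F(m)\in A$. On the other hand, hypothesis (2) applied to $m\in C$ gives $F(m)\notin \mathit{Th(M_m)} = A$. This contradiction shows that $A$ cannot have property $P$, which is exactly the assertion of the theorem.

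The argument uses nothing about $P$ beyond the two closure hypotheses, and this is precisely what makes the generalization work: specializing $P$ to ``consistent'' or to ``sound'' recovers the two clauses of Proposition \ref{incon and soundness} verbatim. The only delicate point---and the closest thing to an obstacle---is the step from ``$A$ is recursively enumerable'' to ``$A = \mathit{Th(M_m)}$ for a genuine index $m$''. One must be sure that the fixed enumeration $\langle M_n : n\in\omega\rangle$ realizes every r.e.\ set of sentences as some $\mathit{Th(M_n)}$, so that the index $m$ may legitimately be fed back into $F$ through hypotheses (1) and (2). Granting this standard fact about the enumeration, no computation is required and the contradiction is immediate.
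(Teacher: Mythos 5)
Your argument is correct and is essentially identical to the paper's own proof: both diagonalize by taking $A=\{F(n):n\in dom(F)\}$, noting $A$ is r.e.\ and hence equals some $\mathit{Th(M_m)}$, deducing $m\in C$ from the assumption that $A$ has property $P$, and deriving the contradiction $F(m)\in A$ versus $F(m)\notin A$. Your explicit attention to why $m\in dom(F)$ (via hypothesis (1)) and to the adequacy of the enumeration is a slight improvement in rigor over the paper's terser write-up, but the route is the same.
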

\begin{proof}\label{}
Let $A=\{F(n):n \in dom(F)\}$. Suppose $A$ has property $P$. Since $F$ is partial recursive, $A$ is recursively enumerable. Suppose $A=\mathit{Th(M_k)}$ for some $k$. Since $A$ has property $P$, we have $k\in C$. Thus, $F(k)\notin \mathit{Th(M_k)}=A$ which contradicts that $F(k)\in A$.
\end{proof}

G\"{o}del announced the second incompleteness theorem $(\sf G2)$ in an abstract published in
October 1930: no consistency proof of systems such as Principia, Zermelo-Fraenkel
set theory, or the systems investigated by Ackermann and von Neumann is possible
by methods which can be formulated in these systems (see \cite{Richard Zach}, p. 431).
For a theory $T$, recall that $\mathbf{Con}(T)$ is the canonical arithmetic sentence expressing the
consistency of $T$ under G\"{o}del's recursive arithmetization of $T$. The following is a modern reformulation of $\sf G2$:

\begin{theorem}\label{G2}
Let $T$ be a recursively axiomatized extension of $\mathbf{PA}$. If $T$ is consistent, then $T\nvdash\mathbf{Con}(T)$.
\end{theorem}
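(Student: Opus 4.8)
The plan is to prove $\sf G2$ by formalizing part (1) of $\sf G1$ inside $T$ itself. First I would recall the G\"odel sentence $\mathbf{G}$ from $\sf G1$; by the fixed-point (diagonal) lemma it satisfies $T \vdash \mathbf{G} \leftrightarrow \neg\mathbf{Pr}_T(\ulcorner\mathbf{G}\urcorner)$. The heart of the argument is that the informal reasoning behind part (1) of $\sf G1$ --- ``if $T$ is consistent, then $T\nvdash\mathbf{G}$'' --- can itself be carried out within $T$, yielding $T \vdash \mathbf{Con}(T) \rightarrow \neg\mathbf{Pr}_T(\ulcorner\mathbf{G}\urcorner)$. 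Together with the fixed-point equivalence this gives the crucial internalized statement $T \vdash \mathbf{Con}(T) \rightarrow \mathbf{G}$.

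To establish $T \vdash \mathbf{Con}(T) \rightarrow \neg\mathbf{Pr}_T(\ulcorner\mathbf{G}\urcorner)$, I would argue entirely inside $T$ using the Hilbert-Bernays-L\"ob conditions $\mathbf{D1}$--$\mathbf{D3}$, which are available because $\mathbf{Con}(T)$ is, by the standing convention of the paper, formulated via a standard provability predicate. Starting from the fixed-point direction $T \vdash \mathbf{G} \rightarrow \neg\mathbf{Pr}_T(\ulcorner\mathbf{G}\urcorner)$, applying $\mathbf{D1}$ and then $\mathbf{D2}$ yields $T \vdash \mathbf{Pr}_T(\ulcorner\mathbf{G}\urcorner) \rightarrow \mathbf{Pr}_T(\ulcorner\neg\mathbf{Pr}_T(\ulcorner\mathbf{G}\urcorner)\urcorner)$, while $\mathbf{D3}$ with $\phi = \mathbf{G}$ gives $T \vdash \mathbf{Pr}_T(\ulcorner\mathbf{G}\urcorner) \rightarrow \mathbf{Pr}_T(\ulcorner\mathbf{Pr}_T(\ulcorner\mathbf{G}\urcorner)\urcorner)$. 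Since $T$ provably recognizes that a proof of a sentence together with a proof of its negation yields a proof of $\mathbf{0}\neq\mathbf{0}$ (this itself follows from $\mathbf{D1}$ and $\mathbf{D2}$ applied to a suitable tautology), these two implications combine to give $T \vdash \mathbf{Pr}_T(\ulcorner\mathbf{G}\urcorner) \rightarrow \mathbf{Pr}_T(\ulcorner\mathbf{0}\neq\mathbf{0}\urcorner)$, i.e.\ $T \vdash \mathbf{Pr}_T(\ulcorner\mathbf{G}\urcorner) \rightarrow \neg\mathbf{Con}(T)$, whose contrapositive is the desired claim.

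With $T \vdash \mathbf{Con}(T) \rightarrow \mathbf{G}$ established, the theorem follows at once: if $T \vdash \mathbf{Con}(T)$ then $T \vdash \mathbf{G}$, contradicting part (1) of $\sf G1$, which guarantees $T \nvdash \mathbf{G}$ under the hypothesis that $T$ is consistent; hence $T \nvdash \mathbf{Con}(T)$. I expect the main obstacle to be the internalization step of the second paragraph --- checking that every line of the metatheoretic argument ``$\mathbf{G}$ provable implies $T$ inconsistent'' is itself provable in $T$ --- and in particular condition $\mathbf{D3}$, the formalized provability-of-provability principle, which is by far the most demanding of the three derivability conditions to verify from scratch, even though here it is granted by the assumption of a standard provability predicate. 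Finally, I would flag the essential role of intensionality: the argument relies on $\mathbf{Con}(T)$ being built from a standard (canonical) provability predicate, and for non-standard choices --- such as Rosser-style or Feferman-style numerations --- the corresponding consistency statement can be provable in $T$, so the conclusion would genuinely fail.
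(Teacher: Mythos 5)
Your proof is correct and follows essentially the same route the paper indicates: it establishes the provable implication $T \vdash \mathbf{Con}(T) \rightarrow \mathbf{G}$ (the paper cites the equivalence $T\vdash \mathbf{Con}(T)\leftrightarrow \mathbf{G}$) by formalizing part (1) of $\sf G1$ inside $T$ via the derivability conditions $\mathbf{D1}$--$\mathbf{D3}$, and then concludes from $T \nvdash \mathbf{G}$ that $T \nvdash \mathbf{Con}(T)$. Your closing remarks on the burden of verifying $\mathbf{D3}$ and on the intensionality of $\mathbf{Con}(T)$ accurately reflect the caveats the paper itself emphasizes in Section 5.
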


From $\sf G2$, we cannot get that $\mathbf{Con}(T)$ is independent of $T$ only assuming that $T$ is consistent.
It is provable in $T$ that if $T$ is consistent, then $T\vdash \mathbf{Con}(T)\leftrightarrow \mathbf{G}$ and thus $T\nvdash\mathbf{Con}(T)$.
However, it is not provable in $T$ that if $T$ is consistent, then $T+\mathbf{Con}(T)$ is also consistent.\footnote{See \cite[Theorem 4, p. 97]{Boolos 93}  for a modal proof in $\mathbf{GL}$ of this fact using the arithmetic completeness theorem for $\mathbf{GL}$.}
So it is not enough to show that $T\nvdash \neg\mathbf{Con}(T)$  only assuming that $T$ is consistent. But we could prove that $\mathbf{Con}(T)$ is independent of $T$ by assuming that $T$ is 1-consistent which is stronger than the condition ``$T$ is consistent".\footnote{It is an easy fact that if $T$ is 1-consistent and $S$ is not a theorem of $T$, then $\mathbf{Pr}_{T}(\ulcorner S\urcorner)$ is not a theorem of $T$.}
Let 1-$\mathbf{Con}(T)$ be the sentence in $L(\mathbf{PA})$  expressing that $T$ is 1-consistent. Fact \ref{summary fact} is a summary of these results.
\begin{fact}\label{summary fact}~
Let $T$ be a recursively axiomatized consistent extension of $\mathbf{PA}$.
\begin{enumerate}[(1)]
  \item
$T\vdash\mathbf{Con}(T)\rightarrow \mathbf{Con}(T+\neg \mathbf{Con}(T))$;
\item $T\nvdash \mathbf{Con}(T)\rightarrow \mathbf{Con}(T+\mathbf{Con}(T))$;
\item  $T\vdash \mathbf{Con}(T)\rightarrow \mathbf{Con}(T+\mathbf{R})$;\footnote{Recall that $\mathbf{R}$ is the Rosser sentence.}
  \item
$T\vdash 1$-$\mathbf{Con}(T)\rightarrow \mathbf{Con}(T+ \mathbf{Con}(T))$.
\end{enumerate}
\end{fact}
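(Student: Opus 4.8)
The plan is to reduce all four parts, via the arithmetized deduction theorem, to assertions about the provability predicate $\mathbf{Pr}_T$, and then to dispatch these using the derivability conditions $\mathbf{D1}$--$\mathbf{D3}$, the reflection principles, and (for the non-derivability in (2)) either $\sf G2$ applied to an auxiliary theory or the arithmetic completeness of $\mathbf{GL}$. The common first step, provable already in $\mathbf{PA}$, is that for every sentence $\psi$ one has $\mathbf{Con}(T+\psi)\leftrightarrow\neg\mathbf{Pr}_T(\ulcorner\neg\psi\urcorner)$, since $T+\psi$ is inconsistent exactly when $T\vdash\neg\psi$ and this equivalence formalizes. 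Writing $\Box\phi$ for $\mathbf{Pr}_T(\ulcorner\phi\urcorner)$ and recalling $\mathbf{Con}(T)=\neg\Box\bot$, this reduction rewrites the relevant consistency statements as $\mathbf{Con}(T+\neg\mathbf{Con}(T))\leftrightarrow\neg\Box\neg\Box\bot$, $\mathbf{Con}(T+\mathbf{Con}(T))\leftrightarrow\neg\Box\Box\bot$, and $\mathbf{Con}(T+\mathbf{R})\leftrightarrow\neg\Box\neg\mathbf{R}$. Hence the four targets become: (1) $\neg\Box\bot\to\neg\Box\neg\Box\bot$; (2) $\neg\Box\bot\to\neg\Box\Box\bot$ (to be refuted); (3) $\neg\Box\bot\to\neg\Box\neg\mathbf{R}$; and (4) $1\text{-}\mathbf{Con}(T)\to\neg\Box\Box\bot$.

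For (1) I would use the arithmetic soundness of $\mathbf{GL}$ (available for every such $T$ since $\mathbf{Pr}_T$ satisfies $\mathbf{D1}$--$\mathbf{D3}$): the L\"ob-axiom instance $\Box(\Box\bot\to\bot)\to\Box\bot$ is just $\Box\neg\Box\bot\to\Box\bot$, and its contrapositive is $\neg\Box\bot\to\neg\Box\neg\Box\bot$; this is merely the formalized $\sf G2$. For (4) I would note that $1\text{-}\mathbf{Con}(T)$ entails over $\mathbf{PA}$ the $\Sigma^0_1$-reflection principle $\Sigma^0_1\text{-}\mathbf{Rfn}_T$: reflection on the $\Delta^0_0$-sentence $\mathbf{0}\neq\mathbf{0}$ yields $\mathbf{Con}(T)=\neg\Box\bot$, while reflection on the $\Sigma^0_1$-sentence $\neg\mathbf{Con}(T)=\Box\bot$ yields $\Box\Box\bot\to\Box\bot$, and these two combine propositionally to $\neg\Box\Box\bot$. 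Both parts are routine once the reduction is in place.

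Part (2) is the one that needs care, and here the honest observation is that the statement requires a hypothesis the Fact leaves implicit: for a pathological consistent $T$ with $T\vdash\neg\mathbf{Con}(T)$ (say $\mathbf{PA}+\neg\mathbf{Con}(\mathbf{PA})$) the implication $\mathbf{Con}(T)\to\mathbf{Con}(T+\mathbf{Con}(T))$ is provable vacuously. Assuming instead $T\nvdash\neg\mathbf{Con}(T)$ (guaranteed, e.g., when $T$ is $\Sigma^0_1$-sound), the auxiliary theory $U:=T+\mathbf{Con}(T)$ is a recursively axiomatized consistent extension of $\mathbf{PA}$, so $\sf G2$ gives $U\nvdash\mathbf{Con}(U)$; since $U\vdash\mathbf{Con}(T)$, a proof of $\mathbf{Con}(T)\to\mathbf{Con}(U)$ in $T$ would yield $U\vdash\mathbf{Con}(U)$, a contradiction. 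Alternatively, as the footnote indicates, one passes to $\mathbf{GL}$: the formula $\neg\Box\bot\to\neg\Box\Box\bot$ fails at the root of the two-point irreflexive transitive frame $w_0<w_1$ (there $\Box\Box\bot$ holds vacuously while $\neg\Box\bot$ fails), so $\mathbf{GL}\nvdash\neg\Box\bot\to\neg\Box\Box\bot$, and the arithmetic completeness theorem for $\mathbf{GL}$ transfers this to $T$.

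Finally, for (3) I would invoke the defining fixed-point property of the Rosser sentence, $T\vdash\mathbf{R}\leftrightarrow\neg\mathbf{Pr}^R_T(\ulcorner\mathbf{R}\urcorner)$ for the Rosser provability predicate $\mathbf{Pr}^R_T$, and formalize Rosser's independence argument inside $\mathbf{PA}$. This yields the formalized Rosser theorem $T\vdash\mathbf{Con}(T)\to(\neg\Box\mathbf{R}\wedge\neg\Box\neg\mathbf{R})$, whose second conjunct is exactly the target $\mathbf{Con}(T)\to\neg\Box\neg\mathbf{R}$ of (3). I expect (3) to be the main obstacle: unlike the others it is not a formal manipulation of $\Box$ but requires checking that the comparison of proof-codes underlying Rosser's construction goes through provably in $\mathbf{PA}$, rather than only in the standard model $\mathfrak{N}$.
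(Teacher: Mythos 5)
Your overall strategy is sound, and since the paper states Fact~\ref{summary fact} without proof (deferring (2) to Boolos's $\mathbf{GL}$-based argument and (4) to a remark on $1$-consistency), your write-up supplies more detail than the paper does. The deduction-theorem reduction $\mathbf{Con}(T+\psi)\leftrightarrow\neg\mathbf{Pr}_T(\ulcorner\neg\psi\urcorner)$ is the right common first step; (1) is indeed the L\"ob instance $\mathbf{Pr}_T(\ulcorner\mathbf{Con}(T)\urcorner)\rightarrow\mathbf{Pr}_T(\ulcorner\mathbf{0}\neq\mathbf{0}\urcorner)$, i.e.\ formalized $\sf G2$; (3) is the formalized Rosser theorem, $T\vdash\mathbf{Con}(T)\rightarrow(\neg\mathbf{Pr}_T(\ulcorner\mathbf{R}\urcorner)\wedge\neg\mathbf{Pr}_T(\ulcorner\neg\mathbf{R}\urcorner))$, which does go through in $\PA$ via provable $\Sigma^0_1$-completeness (see Smory\'nski's Handbook chapter), so the obstacle you flag is resolvable by a standard argument; and (4) follows from Fact~\ref{key diff} exactly as you say. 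Your most valuable observation concerns (2): as printed it is false for arbitrary recursively axiomatized consistent extensions of $\PA$, since for $T=\PA+\neg\mathbf{Con}(\PA)$ one has $T\vdash\neg\mathbf{Con}(T)$ and the implication is provable vacuously; some hypothesis such as the consistency of $T+\mathbf{Con}(T)$ (e.g.\ $\Sigma^0_1$-soundness of $T$) is genuinely needed. Under that hypothesis your direct argument --- apply $\sf G2$ to $U=T+\mathbf{Con}(T)$ and observe that $T\vdash\mathbf{Con}(T)\rightarrow\mathbf{Con}(U)$ would yield $U\vdash\mathbf{Con}(U)$ --- is correct and is preferable to the route the paper cites, since Solovay's arithmetical completeness theorem for $\mathbf{GL}$ itself presupposes $\Sigma^0_1$-soundness. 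One small slip: in your Kripke-frame check the implication fails at the root because $\Box\bot$ (not $\neg\Box\bot$) fails there while $\Box\Box\bot$ holds; as written, your evaluation would make the implication vacuously true at $w_0$.
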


An illusion of the application of G\"{o}del's incompleteness theorem is that we can add consistencies (or Out-G\"{o}deling) forever: from $\mathbf{Con}(T)$, we have $\mathbf{Con}(T+\mathbf{Con}(T))$, then $\mathbf{Con}(T+\mathbf{Con}(T+\mathbf{Con}(T)))$ and so on. However, by Fact \ref{summary fact}, this does not hold. For the iteration of adding the consistency statement (or Out-G\"{o}deling), we need a stronger condition: $T$ is 1-consistent.
The following fact shows the difference between $\mathbf{Con}(T)$ and 1-$\mathbf{Con}(T)$.
\begin{fact}[\cite{Smorynski 1977}]\label{key diff}~
Let $T$ be a recursively axiomatized consistent extension of $\mathbf{PA}$. Then $T\vdash \mathbf{Con}(T)\leftrightarrow \Pi^0_1$-$\mathbf{Rfn}_T$ and $T\vdash 1$-$\mathbf{Con}(T)\leftrightarrow \Sigma^0_1$-$\mathbf{Rfn}_T$.
\end{fact}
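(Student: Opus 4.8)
The plan is to read each biconditional as an equality of deductive closures: for the first, to show both that $T+\mathbf{Con}(T)$ proves every instance of the schema $\Pi^0_1$-$\mathbf{Rfn}_T$ and that $T+\Pi^0_1$-$\mathbf{Rfn}_T$ proves $\mathbf{Con}(T)$, and symmetrically for the second. Two standard tools do the work. The first is the derivability conditions $\mathbf{D1}$--$\mathbf{D3}$ for the standard predicate $\mathbf{Pr}_T$, which let me reason about provability inside $T$. The second, and the real engine, is \emph{provable $\Sigma^0_1$-completeness}: for every $\Sigma^0_1$ sentence $\sigma$ one has $\mathbf{PA}\vdash\sigma\rightarrow\mathbf{Pr}_T(\ulcorner\sigma\urcorner)$, and this holds in the uniform form $\mathbf{PA}\vdash\forall v(\sigma(v)\rightarrow\mathbf{Pr}_T(\ulcorner\sigma(\dot v)\urcorner))$ for $\Sigma^0_1$ formulas $\sigma(v)$. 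I also use the $\mathbf{PA}$-definable partial truth predicates $\mathbf{True}_{\Sigma^0_1}$ and $\mathbf{True}_{\Pi^0_1}$ satisfying $\mathbf{PA}\vdash\theta\leftrightarrow\mathbf{True}_{\Gamma}(\ulcorner\theta\urcorner)$ for every standard $\theta$ in the class $\Gamma$.

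For the first equivalence, the direction $\Pi^0_1$-$\mathbf{Rfn}_T\rightarrow\mathbf{Con}(T)$ needs only the single instance $\phi:=(\mathbf{0}\neq\mathbf{0})$, which is $\Delta^0_0\subseteq\Pi^0_1$: since $T\vdash\neg(\mathbf{0}\neq\mathbf{0})$, the instance $\mathbf{Pr}_T(\ulcorner\mathbf{0}\neq\mathbf{0}\urcorner)\rightarrow\mathbf{0}\neq\mathbf{0}$ collapses to $\neg\mathbf{Pr}_T(\ulcorner\mathbf{0}\neq\mathbf{0}\urcorner)$, which is $\mathbf{Con}(T)$. For the converse, fix $\phi\in\Pi^0_1$, so $\neg\phi$ is $\Sigma^0_1$, and argue inside $T$: from $\mathbf{Con}(T)$ and $\mathbf{Pr}_T(\ulcorner\phi\urcorner)$, suppose toward a contradiction that $\neg\phi$; provable $\Sigma^0_1$-completeness gives $\mathbf{Pr}_T(\ulcorner\neg\phi\urcorner)$, and then $\mathbf{D1}$ on the validity $\phi\rightarrow(\neg\phi\rightarrow\mathbf{0}\neq\mathbf{0})$ with two applications of $\mathbf{D2}$ yields $\mathbf{Pr}_T(\ulcorner\mathbf{0}\neq\mathbf{0}\urcorner)$, contradicting $\mathbf{Con}(T)$; hence $\phi$.

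For the second equivalence I would first reconcile the paper's definition of $1$-consistency with $\Sigma^0_1$-soundness: for consistent $T\supseteq\mathbf{PA}$, $T$ is $1$-consistent iff $T$ proves no false $\Sigma^0_1$ sentence. A provable false $\Sigma^0_1$ sentence $\exists x\,\delta(x)$ with $\delta\in\Delta^0_0$ makes $\delta$ itself a witness to $1$-inconsistency, since each true $\neg\delta(\overline n)$ is $T$-provable; conversely a $\Delta^0_1$ witness $\phi$ forces $\exists x\,\phi(x)$ to be false, because a true instance $\phi(\overline{n_0})$ would be a true $\Sigma^0_1$ sentence and hence provable, contradicting $T\vdash\neg\phi(\overline{n_0})$ and consistency. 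Formalized, $1$-$\mathbf{Con}(T)$ is equivalent over $\mathbf{PA}$ to $\forall x(\mathbf{Sent}_{\Sigma^0_1}(x)\wedge\mathbf{Pr}_T(x)\rightarrow\mathbf{True}_{\Sigma^0_1}(x))$. The direction $1$-$\mathbf{Con}(T)\rightarrow\Sigma^0_1$-$\mathbf{Rfn}_T$ then specializes this uniform statement to a standard $\sigma\in\Sigma^0_1$ and rewrites $\mathbf{True}_{\Sigma^0_1}(\ulcorner\sigma\urcorner)$ as $\sigma$ by the Tarski biconditional.

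The hard part is the converse. A naive contradiction argument only refutes $T$-provability of each \emph{standard} false $\Sigma^0_1$ sentence, and so does not exclude a nonstandard provable false $\Sigma^0_1$ sentence in a model; this is precisely the gap between the local schema and the single quantified sentence $1$-$\mathbf{Con}(T)$. The way I would close it is to use reflection in its uniform form, reflecting on the universal $\Sigma^0_1$ predicate $\mathbf{True}_{\Sigma^0_1}(\dot v)$ with its free variable: $\forall v(\mathbf{Pr}_T(\ulcorner\mathbf{True}_{\Sigma^0_1}(\dot v)\urcorner)\rightarrow\mathbf{True}_{\Sigma^0_1}(v))$, together with the $\mathbf{PA}$-provable fact that for $v$ coding a $\Sigma^0_1$ sentence $\mathbf{Pr}_T(\ulcorner\mathbf{True}_{\Sigma^0_1}(\dot v)\urcorner)$ is equivalent to $\mathbf{Pr}_T(v)$, yields exactly the uniform statement above. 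I expect this uniform passage, and the bookkeeping for the partial truth predicate inside $\mathbf{PA}$, to be the main obstacle, since it is the one point where the single consistency-type sentence genuinely outruns finitely many local reflection instances; the other three implications are routine manipulations of $\mathbf{D1}$--$\mathbf{D3}$ and $\Sigma^0_1$-completeness.
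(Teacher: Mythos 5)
The paper states this Fact without proof, citing Smory\'nski, so there is nothing to compare against on the level of technique; I can only assess your argument on its own terms. Your treatment of the first equivalence is correct and is the standard one: the instance for $\mathbf{0}\neq\mathbf{0}$ gives $\mathbf{Con}(T)$ from the schema, and provable $\Sigma^0_1$-completeness applied to $\neg\phi$ gives each $\Pi^0_1$ instance from $\mathbf{Con}(T)$. Your reduction of $1$-consistency to $\Sigma^0_1$-soundness and the direction from $1$-$\mathbf{Con}(T)$ to the local $\Sigma^0_1$ instances are also fine.

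The gap is in the converse of the second equivalence, and you have put your finger on exactly the right spot but then stepped over it. The paper defines $\Gamma$-$\mathbf{Rfn}_T$ as the \emph{local} schema $\mathbf{Pr}_T(\ulcorner\phi\urcorner)\rightarrow\phi$ for \emph{sentences} $\phi\in\Gamma$. The instance you invoke to close the argument, $\forall v(\mathbf{Pr}_T(\ulcorner\mathbf{True}_{\Sigma^0_1}(\dot v)\urcorner)\rightarrow\mathbf{True}_{\Sigma^0_1}(v))$, is an instance of the \emph{uniform} reflection principle for a formula with a free variable; it is not in the schema you are allowed to use, and it cannot be derived from it. This is not a presentational quibble: by results of Goryachev and Beklemishev, $T$ plus the full local reflection schema $\mathbf{Rfn}_T$ has the same $\Pi^0_1$ consequences as the $\omega$-iterated consistency extension $T_\omega$, hence does not prove $\mathbf{Con}(T_\omega)$; but the single sentence $1$-$\mathbf{Con}(T)$ (global $\Sigma^0_1$-soundness) does prove $\mathbf{Con}(T_\omega)$, essentially by iterating the paper's Fact 3.5(4) inside $\mathbf{PA}$. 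So $T+\Sigma^0_1$-$\mathbf{Rfn}_T$ genuinely does not prove the global sentence $1$-$\mathbf{Con}(T)$, and no bookkeeping with partial truth predicates will repair that implication. What is true, and what you should prove, is one of two corrected readings: either take the right-hand side to be uniform $\Sigma^0_1$ reflection $\mathbf{RFN}_{\Sigma^0_1}(T)$, in which case your truth-predicate argument goes through; or take $1$-$\mathbf{Con}(T)$ itself as a schema, one sentence per standard $\Delta^0_1$ formula $\phi$, in which case each $1$-consistency instance for $\phi$ is interderivable over $T$ with the local reflection instance for $\exists x\,\phi(x)$ by a direct application of provable $\Sigma^0_1$-completeness, and no truth predicate is needed. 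As stated, with the paper's own definitions, the second biconditional needs one of these adjustments, and your proof should say which one it is making.
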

As a corollary of Fact \ref{key diff}, 1-$\mathbf{Con}(T)\vdash$ l-$\mathbf{Con}(T+\mathbf{Con}(T))$ (see Proposition 3 in \cite{Pudlak 99}).
Thus, if we assume 1-$\mathbf{Con}(T)$, then we can prove $\mathbf{Con}(T)$, $\mathbf{Con}(T+\mathbf{Con}(T))$,
$\mathbf{Con}(T+\mathbf{Con}(T+\mathbf{Con}(T)))$ and we can continue forever (note that the assumption 1-$\mathbf{Con}(T)$  is stronger than all these statements).

%will concentrate on the phenomenon, or rather illusion, that we can
%always extend our assumptions by a true independent sentence.

In  summary,  the differences between Rosser sentence and  G\"{o}del sentence, as well as between $\mathbf{Con}(T)$ and 1-$\mathbf{Con}(T)$ are very important. However, these differences are often overlooked in informal philosophical discussions of G\"{o}del's incompleteness theorem.

\section{G\"{o}del's disjunctive thesis}

The focus of Krajewski's  paper  \cite{Krajewski 2019} is not about G\"{o}del's Disjunctive Thesis even if he
gives a very brief discussion of G\"{o}del's Disjunctive Thesis related to the Anti-Mechanist Arguments in Section 7.
In this section, we give a more detailed discussion of G\"{o}del's Disjunctive Thesis and its relevance to the Mechanistic Thesis based on recent advances on the study of G\"{o}del's Disjunctive Thesis. This section is a summary of Koellner's papers \cite{Peter 18} and \cite{Peter 18 second}, and we follow Koellner's presentation very closely.

G\"{o}del did not argue that his incompleteness theorem implies that the
mind cannot be mechanized.  Instead, G\"{o}del argued that  his incompleteness theorem implies a weaker conclusion: G\"{o}del's Disjunctive Thesis ($\sf GD$).
\begin{description}
  \item[The first disjunct] The mind cannot be mechanized.
  \item[The second disjunct] There are absolutely undecidable statements.\footnote{In the sense that  there are
mathematical truths that cannot be proved by the idealized human mind.}
  \item[G\"{o}del's Disjunctive Thesis ($\sf GD$)] Either the first disjunct or the second disjunct holds.\footnote{The original version of $\sf GD$ was introduce by G\"{o}del in \cite{Godel 1951} (see p. 310): ``So the following disjunctive conclusion is inevitable: either mathematics is incompletable in this sense, that its evident axioms can
never be comprised in a finite rule, that is to say, the human mind
(even within the realm of pure mathematics) infinitely surpasses
the powers of any finite machine, or else there exist absolutely
unsolvable diophantine problems of the type specified (where the
case that both terms of the disjunction are true is not excluded,
so that there are, strictly speaking, three alternatives)".}
\end{description}

G\"{o}del's Disjunctive thesis ($\sf GD$) concerns the limit of mathematical knowledge and the possibility of
the existence of mathematical truths that are inaccessible to the  idealized human mind.  The first disjunct expresses an aspect of the power of the  idealized human mind, while the second disjunct expresses an aspect of its limitations.\footnote{We refer to \cite{DST}, a recent comprehensive research volume about $\sf GD$,  for more discussions of the status of $\sf GD$.}

%\begin{description}
  %\item[The first disjunct (FD)] The mind cannot be mechanized .
  %\item[The second disjunct (SD)]
 % \item[\Godel's Disjunction (GD)] Either FD holds, or SD holds.
%\end{description}

%The first is the claim that ¡±
%. The second is the claim that ¡°. \Godel's disjunction states that at least one of these claims must hold: either ¡°the mind cannot be mechanized¡± or ¡°mathematical truth
%outstrips the idealized human mind.¡±

%In the literature, the first disjunct has received far more attention than the second disjunct.

What about G\"{o}del's view toward the first disjunct and the second disjunct?
For G\"{o}del, the first disjunct is true and the second disjunct is false; that is the mind cannot be mechanized and human mind is sufficiently powerful to capture all mathematical truths.
G\"{o}del's incompleteness theorem shows certain weaknesses and limitations of one given Turing machine. For G\"{o}del, mathematical proof is an essentially
creative activity and his incompleteness theorem indicates the creative power
of human reason.  G\"{o}del believes that   the distinctiveness of the human mind when compared to a Turing machine is evident in its ability to come up with new axioms and develop new
mathematical theories.
G\"{o}del shared Hilbert's belief expressed in 1926 in the words: ``in mathematics there is no ignoramuses, we should know and we must know" (see \cite{Reid 96}, p. 192).
Based on his rationalistic optimism, G\"{o}del believed that we are arithmetically omniscient and the second disjunct  is false.\footnote{For more discussions of the status of the second disjunct, we refer to  \cite{DST}.}
However, G\"{o}del  admits that he cannot give a convincing argument for either the first disjunct or the second disjunct. G\"{o}del thinks that the most he can claim to have established
is his  Disjunctive Thesis. For G\"{o}del, $\sf GD$  is a ``mathematically established fact" of great philosophical interest which follows from his incompleteness theorem, and it is ``entirely independent from the standpoint taken toward the foundation of mathematics" (G\"{o}del, \cite{Godel 1951}).\footnote{In the literature there is a consensus that G\"{o}del's argument for $\sf GD$ is definitive, but until now we have
no compelling evidence for or against any of the two disjuncts (Horsten and Welch, \cite{DST}).}
In the following, we give a concise overview of the current progress on G\"{o}del's disjunctive thesis based on Koellner's work in \cite{Peter 16, Peter 18, Peter 18 second}.

%Now we have two questions: (1)  whether whether incompleteness theorems show that \Godel's disjunction holds; (2) whether incompleteness theorems show that the first disjunct holds;

Let $\K$ be the set of sentences in $L(\PA)$ that the idealized human mind can know. Let $\mathbf{Truth}$ be the set of sentences in $L(\mathbf{PA})$ which are true in the standard model of arithmetic and $\textbf{Prov}$ be the set of sentences in $L(\mathbf{PA})$ which are provable in $\mathbf{PA}$. G\"{o}del refers to $\textbf{Truth}$ as objective mathematics and $\K$ as subjective mathematics. Recall that a  theory
$T$ in $L(\PA)$ is sound if $T \subseteq \mathbf{Truth}$.
In this paper, we assume  that $\K$ is sound.
However, from $\sf G1$, we have $\textbf{Prov}\subsetneq \textbf{Truth}$ since G\"{o}del's sentence  is a true sentence of arithmetic  not provable in $\PA$.\footnote{Let us take Fermat's last theorem for another example. People have shown that Fermat's last theorem is a true sentence of arithmetic but, as far as I know, it is still an open problem whether Fermat's last theorem is provable in $\PA$. So Fermat's last theorem belongs to $\K$ but it is open whether it belongs to $\textbf{Prov}$.}
%\footnote{G\"{o}del's sentence is the self-reference sentence G\"{o}del constructs in his proof of $\mathbf{G1}$ which says ``I am not provable in $\PA$".}
%G\"{o}del's sentence is knowable and hence belongs to $\K$ but it does not belong to \textbf{Prof}.
%But  it is knowable that $\PA$ is consistent  since it is provable in $\ZF$ that  $\PA$ is consistent.
%So $\varphi_0$ belongs to $\K$ but it does not belong to \textbf{Thm}.

Note that $\sf GD$ concerns the concepts of relative provability, absolute
provability, and truth.
Before we present the  analysis of $\sf GD$, let us first examine two key notions about provability: relative provability and absolute provability.
%The notion of provability is unambiguous for arithmetic  and provability for any extension of  $\PA$.
The notion of relative provability is well understood and we have a precise definition of relative provability in a formal system. But the notion of absolute provability is much more ambiguous and we have no unambiguous formal definition of absolute provability as far as we know.
The notion of absolute provability is intended to be intensionally different from the notion of relative provability in that absolute provability is not  conceptually connected to a formal system. In contrast to the notion of relative provability, there is little agreement  on  what principles  of the notion ``absolute provability" should be adopted.
In this paper, we identify the
notion of ``relatively provable with respect to a given formal system $F$" with the notion of ``producible by a Turing machine $M$"  (where $M$ is the Turing machine corresponding to $F$)\footnote{Note that  sentences relatively provable with respect to a given formal system $F$ can be enumerated by a Turing machine.} and we identify  the notion of ``absolute provability" with the notion of ``what the idealized human mind can know".\footnote{Williamson \cite{Williamson 16} makes the similar definition that a  mathematical hypothesis is absolutely decidable if and only if either it or its negation
can in principle be known by a normal mathematical process; otherwise it is absolutely undecidable.}
Under this assumption, $\K$ is just the set of sentences that are absolutely provable.

%, in a variant formulation, the related concepts
%of an ¡°idealized finite machine¡± and an ¡°idealized human mind.¡±
%Let $\langle M_e: e\in \mathbb{N}\rangle$ be an enumeration of Turing machines and $Th(M_e)$ be the theory enumerated by the Turing machine $M_e$.
In this paper, we assume without loss of generality that $\mathbf{Q}\subseteq \mathit{Th(M_n)}$ such that both $\sf G1$ and $\sf G2$ apply to $\mathit{Th(M_n)}$.
For a natural number $n$, we say that a statement $\phi$ is \emph{relatively undecidable} w.r.t.~ theory $\mathit{Th(M_n)}$ for some $n$ if  $\phi\notin \mathit{Th(M_n)}$ and  $\neg\phi \notin \mathit{Th(M_n)}$. We say that a statement $\phi$ is \emph{absolutely
undecidable} if  $\phi\notin \K$ and $\neg\phi \notin \K$.
Let us first examine what the incompleteness theorem tells us about
the relationship between $\mathit{Th(M_n)}, \K$ and $\mathbf{Truth}$. %\Godel makes three main claims concerning this relationship.

%Claim 1. For any formal system $P$, if $P \subseteq T$, then $P \varsubsetneq T$.
Note that $\sf G1$ tells us that for any sufficiently strong consistent theory $F$ containing $\mathbf{Q}$, there are statements which are relatively undecidable with respect to
$F$. But as G\"{o}del argued, these statements are not absolutely undecidable; instead one can always pass to higher systems in which the sentence
in question is provable (see \cite{Godel Collected Works III}, p. 35).
For example, from $\sf G2$, $\mathbf{Con}(\PA)$ is not provable in $\PA$; but $\mathbf{Con}(\PA)$ is provable in second order arithmetic ($\mathbf{Z_2}$).
Since $\sf G2$ applies to $\mathbf{Z_2}$, the $\Pi^1_0$-truth $\mathbf{Con}(\mathbf{Z_2})$ is not provable in $\mathbf{Z_2}$. But $\mathbf{Con}(\mathbf{Z_2})$ is provable in $\mathbf{Z_3}$ (third order arithmetic) which captures the $\Pi^1_0$-truth that was missed
by $\mathbf{Z_2}$. This pattern continues up through the orders of arithmetic and
up through the hierarchy of set-theoretic systems; at each stage a missing
$\Pi^1_0$-truth is captured at the next stage (see \cite{Peter 18}, p. 347).

%If we grant that each of the systems F in the above hierarchy is subsumed
%by K¡ªthat is, such that $F \subseteq K$¡ªthen we can conclude that K outstrips
%each F in the above hierarchy (for at each successor stage K will capture
%the $\Pi^1_0$-truth missed by the system F at the previous stage). It is tempting
%to conclude outright that K cannot coincide with any F. But we
%have to be careful. We have to keep track of our assumptions. \Godel is quite
%careful-he draws only a conditional conclusion:

Now let us  examine the question of whether  the incompleteness theorem shows that $\sf GD$ holds. From the literature, we have found a natural framework $\mathbf{EA_T}$ in which we can show that  if the concepts of relative provability, absolute provability and truth satisfy some
principles, then one can give a rigorous proof of $\sf GD$, vindicating G\"{o}del's claim that $\sf GD$ is a mathematically established fact (see Koellner \cite{Peter 18}, p. 355).%\footnote{\cite{Peter 18}, p..}

Now we introduce two systems of epistemic arithmetic: $\mathbf{EA}$ and $\mathbf{EA_T}$. For the presentation of $\mathbf{EA}$ and $\mathbf{EA_T}$, we closely follow Koellner's discussion in \cite{Peter 16, Peter 18}. The first is designed to deal with $\mathit{Th(M_e)}$ and $\K$, and the second is designed to deal with $\mathit{Th(M_e)}, \K$ and $\mathbf{Truth}$.
For $\mathbf{EA_T}$, we only require a typed truth predicate.\footnote{A typed truth predicate is one that applies only to statements that do not themselves
involve the truth predicate. In contrast, a type-free truth predicate is one which also applies
to statements that themselves involve the truth predicate. The principles governing typed
truth predicates are perfectly straightforward and uncontroversial, while the principles
governing type-free truth predicates are much more delicate. See \cite{Peter 18}}
The basic system $\mathbf{EA}$ of epistemic arithmetic has axioms of arithmetic and
axioms of absolute provability, and the extended system $\mathbf{EA_T}$ has additional
axioms of typed truth.\footnote{These systems  were first introduced by Myhill \cite{Some remarks on the notion of proof}, Reinhardt \cite{Reinhardt 33,Reinhardt 32,Reinhardt 86} and Shapiro \cite{Shapiro 85}, and then investigated by many others (e.g. Horsten
\cite{defense of epistemic arithmetic}, Leitgeb \cite{On formal and informal provability}, Carlson \cite{Carlson 00}, Koellner \cite{Peter 16, Peter 18} and others).}
In $\mathbf{EA}$ and $\mathbf{EA_T}$,  $\K$ is treated as an operator rather than a predicate. From results
 in G\"{o}del \cite{interpretation}, Myhill \cite{Some remarks on the notion of proof}, Montague \cite{Richard Montague}, Thomason \cite{Thomason}, and others, if one formulates a
theory of absolute provability with $\K$ as a predicate then inconsistency may come (see \cite{Peter 18}).
The basic axioms
of absolute provability are:\footnote{The basic conditions we will impose on knowability are: (1) if the idealized human mind knows $\phi$ and  $\phi\rightarrow\psi$  then the idealized human mind knows  $\psi$; (2) if the idealized human mind knows $\phi$ then $\phi$ is true; (3) if the idealized human mind knows $\phi$ then the idealized human mind knows that the idealized human mind knows $\phi$.}
\begin{description}
  \item[K1] Universal closures of formulas of the form
$\K\phi$ where $\phi$ is a first-order validity.
  \item[K2] Universal closures of formulas of the form
$(\K(\phi\rightarrow\psi) \wedge \K\phi) \rightarrow \K\psi$.
  \item[K3] Universal closures of formulas of the form
$\K\phi \rightarrow \phi$.
  \item[K4] Universal closures of formulas of the form
$\K\phi \rightarrow \K\K\phi$.\footnote{$\mathbf{K1}$-known as logical omniscience-says that $\K$ holds of all first-order logical validities; $\mathbf{K2}$ says that $\K$ is closed
under modus ponens, and so distributes across logical derivations; $\mathbf{K3}$ says that $\K$ is correct; and $\mathbf{K4}$ says that $\K$ is absolutely self-reflective (see \cite{Peter 18}).}
\end{description}

The language $L({\mathbf{EA}})$ is $L({\mathbf{PA}})$ expanded to include an operator $\K$ that takes
formulas of $L({\mathbf{EA}})$ as arguments. The axioms of arithmetic are simply those of
$\PA$, only now the induction scheme is taken to cover all formulas in $L({\mathbf{EA}})$. For
a collection $\Gamma$ of formulas in $L({\mathbf{EA}})$, let $\K\Gamma$ denote the collection of formulas
$\K\phi$ where $\phi\in\Gamma$. The system $\mathbf{EA}$ is the theory axiomatized by $\Sigma\cup \K\Sigma$,
where $\Sigma$ consists of the axioms of $\PA$ in the language $L({\mathbf{EA}})$ and the basic
axioms of absolute provability.
The language $L({\mathbf{EA_T}})$ of $\mathbf{EA_T}$ is the language $L({\mathbf{EA}})$ augmented with a unary
predicate $T$. The system $\mathbf{EA_T}$ is the theory axiomatized by $\Sigma\cup \K\Sigma$, where
$\Sigma$ consists of the axioms of $\PA$ in the language $L({\mathbf{EA_T}})$, the basic axioms
of absolute provability (in the language $L({\mathbf{EA_T}})$), and the Tarskian axioms of
truth for the language $L({\mathbf{EA}})$.

From the incompleteness theorem, G\"{o}del made the following two claims about the relationship between  $\mathit{Th(M_e)}, \K$ and $\mathbf{Truth}$.
\begin{description}
  \item[Claim One] For any $e\in\mathbb{N}$, $\K (\mathit{Th(M_e)} \subseteq \mathbf{Truth}) \rightarrow \mathit{Th(M_e)} \subsetneqq \K$.\footnote{The informal proof of Claim One is as follows: Suppose $\K (\mathit{Th(M_e)} \subseteq \mathbf{Truth})$. Since it is knowable that $\mathit{Th(M_e)}$ is consistent, it is knowable that there is a true  sentence of arithmetic which is not provable in $\mathit{Th(M_e)}$. So $\mathit{Th(M_e)} \subsetneq \K$.}
  \item[Claim Two] Either  $\neg \exists e (\mathit{Th(M_e)} = \K)$ or  $\exists\phi (\phi\in  \mathbf{Truth} \wedge \phi \notin \K \wedge \neg\phi \notin \K)$.\footnote{The informal proof of Claim Two is as follows: Suppose $\mathit{Th(M_e)} = \K$ for some $e$. Since $\mathit{Th(M_e)}$ is R.E. but Truth is not arithmetic, $\K \subsetneq\mathbf{Truth}$. So we can find some $\phi\in  \mathbf{Truth}$ but $\phi \notin \K$ and $\neg\phi \notin \K$.}
\end{description}

G\"{o}del's Claim One is formalizable and provable in $\mathbf{EA_T}$. In fact, something stronger is provable in $\mathbf{EA}$ as the following theorem shows:

\begin{theorem}[Reinhardt, \cite{Reinhardt 32}]\label{}
Assume that $S$ includes $\mathbf{EA}$. Suppose $F(x)$ is a formula with one free variable.
\begin{enumerate}[(1)]
  \item If  for each sentence $\phi$, $S\vdash \K(F(\ulcorner\phi\urcorner) \rightarrow \phi)$.
Then there is a sentence $\varphi$ such that $S\vdash \K\varphi \wedge \K\neg F (\ulcorner\varphi\urcorner)$.
  \item If for each sentence $\phi$, $S \vdash \K(\K\phi \rightarrow F(\ulcorner\phi\urcorner))$.
Then $S \vdash \K\neg \K(\mathbf{Con}(F))$.
\end{enumerate}
\end{theorem}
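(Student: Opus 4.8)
The plan is to treat both parts as formalized G\"{o}del-style self-reference arguments carried out in the modal setting, where the only extra ingredients beyond ordinary provability logic are the S4-type axioms $\mathbf{K1}$--$\mathbf{K4}$ together with the fact that $\mathbf{EA}$ contains $\K\Sigma$. The first thing I would record is a derived necessitation rule: if $\PA\vdash\theta$ then $S\vdash\K\theta$. This holds because $\theta$ is a first-order consequence of finitely many axioms $\sigma_1,\dots,\sigma_k\in\Sigma$, so $\sigma_1\to(\cdots\to(\sigma_k\to\theta))$ is a logical validity and $\mathbf{K1}$ gives $\K$ of it, while $\K\Sigma$ gives each $\K\sigma_i$; repeated use of $\mathbf{K2}$ then yields $\K\theta$. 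Together with $\mathbf{K2}$ this lets me carry out arbitrary propositional modal reasoning under $\K$.

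For part (1), I would apply the diagonal lemma (available from the $\PA$-fragment) to the formula $\neg F(x)$ to obtain a sentence $\varphi$ with $\PA\vdash\varphi\leftrightarrow\neg F(\ulcorner\varphi\urcorner)$; by the necessitation rule, $S\vdash\K(\varphi\leftrightarrow\neg F(\ulcorner\varphi\urcorner))$. Instantiating the hypothesis at $\phi=\varphi$ gives $S\vdash\K(F(\ulcorner\varphi\urcorner)\to\varphi)$. Chaining $F(\ulcorner\varphi\urcorner)\to\varphi$ with $\varphi\to\neg F(\ulcorner\varphi\urcorner)$ under $\K$ (via $\mathbf{K1}$ and $\mathbf{K2}$) produces $\K(F(\ulcorner\varphi\urcorner)\to\neg F(\ulcorner\varphi\urcorner))$, whence $S\vdash\K\neg F(\ulcorner\varphi\urcorner)$; feeding this back through $\K(\neg F(\ulcorner\varphi\urcorner)\to\varphi)$ gives $S\vdash\K\varphi$. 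This is exactly the desired $S\vdash\K\varphi\wedge\K\neg F(\ulcorner\varphi\urcorner)$, and this part should go through without real difficulty.

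For part (2), I first convert the hypothesis into a statement about $F$-provability: since any axiom has a trivial proof, $\PA\vdash F(\ulcorner\phi\urcorner)\to\mathbf{Pr}_F(\ulcorner\phi\urcorner)$, so the hypothesis yields $S\vdash\K(\K\phi\to\mathbf{Pr}_F(\ulcorner\phi\urcorner))$ for every $\phi$. Let $G$ be a G\"{o}del sentence for $\mathbf{Pr}_F$, i.e.\ $\PA\vdash G\leftrightarrow\neg\mathbf{Pr}_F(\ulcorner G\urcorner)$, and recall the formalized second incompleteness theorem $\PA\vdash\mathbf{Con}(F)\to G$. The core is to show $S\vdash\neg\K G$: from $\K G$ one gets $\K\K G$ by $\mathbf{K4}$, hence $\K\mathbf{Pr}_F(\ulcorner G\urcorner)$ (hypothesis and $\mathbf{K2}$), while the diagonal equivalence gives $\K\neg\mathbf{Pr}_F(\ulcorner G\urcorner)$; these contradict, producing $\K\bot$ and then $\bot$ by $\mathbf{K3}$. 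Combining $\neg\K G$ with $S\vdash\K(\mathbf{Con}(F)\to G)$ gives $\neg\K\mathbf{Con}(F)$; finally I would lift the whole derivation one level under $\K$---every premise being either a necessitated $\PA$-theorem or an instance of the hypothesis, so $\mathbf{K4}$ makes its $\K$-version available---to obtain $S\vdash\K\neg\K\mathbf{Con}(F)$.

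The main obstacle is concentrated in part (2), in two places. First, invoking the formalized second incompleteness theorem requires that $\mathbf{Pr}_F$ be a standard provability predicate satisfying the derivability conditions $\mathbf{D1}$--$\mathbf{D3}$, which must be secured from the standing conventions on $\mathbf{Con}(F)$ rather than taken for granted for an arbitrary numeration $F$. Second, upgrading $\neg\K\mathbf{Con}(F)$ to the knowable form $\K\neg\K\mathbf{Con}(F)$ demands careful modal bookkeeping with nested occurrences of $\K$: one must verify that each step of the inner argument survives being placed under an extra $\K$, which is precisely where positive introspection $\mathbf{K4}$ and factivity $\mathbf{K3}$ are essential.
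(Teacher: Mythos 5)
The paper states this theorem without proof, citing Reinhardt, so there is no in-paper argument to compare against. Your proposal is essentially the standard Reinhardt--Koellner proof and is correct: part (1) is a clean diagonal argument on $\neg F(x)$ using the derived necessitation rule, and part (2) correctly reduces to the formalized second incompleteness theorem for $\mathbf{Pr}_F$ followed by a necessitation of the entire derivation (with $\mathbf{K4}$ supplying the $\K$-versions of the hypothesis instances and $\K\Sigma$ supplying those of the modal axioms). The one point you rightly isolate---that formalized $\sf G2$ for $\mathbf{Pr}_F$ presupposes a $\Sigma^0_1$ numeration and a standard provability predicate, which the bare phrase ``$F(x)$ is a formula with one free variable'' does not by itself guarantee---is exactly where the argument would break for the Rosser-style or Feferman-style $\Pi^0_1$ numerations discussed in Section 5 of the paper, and is harmless under the paper's standing conventions.
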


%one also obtains an absolute version of the
%second incompleteness theorem:
From the following theorem, $\sf GD$ is also formalizable and provable in $\mathbf{EA_T}$ which confirms G\"{o}del's claim that $\sf GD$ is a mathematically established fact.\footnote{It is a little delicate to formalize $\sf GD$ in $\mathbf{EA_T}$ since $\K$ is formalized as an operator in $\mathbf{EA_T}$ and so we are prohibited from quantifying into it. For the details, we refer to Reinhardt \cite{Reinhardt 86} and Koellner \cite{Peter 16,Peter 18}.}

%The
%solution is to employ the truth predicate and use it to replace $\exists x \, \K(x)$ with
%$\exists x \, T(\d{\K}x)$. The first disjunct can be formalized as
%$\neg\exists e \,  \forall x (Sent_{L_{\PA}}(x) \rightarrow (T(\d{\K}x) \leftrightarrow x \in F_e))$
%and the second disjunct can be formalized  as
%$\exists x \,  ($Sent$_{L_{PA}}(x) \wedge T(x) \wedge \neg T(\d{K}x) \wedge \neg T(\d{K}\d{\neg}x))$.

\begin{theorem}[Reinhardt, \cite{Reinhardt 86}]\label{}
Assume $\mathbf{EA_T}$. Then $\sf GD$ holds.
\end{theorem}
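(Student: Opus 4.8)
The plan is to prove the disjunction $\sf GD$ inside $\mathbf{EA_T}$ by establishing the conditional ``if the first disjunct fails then the second holds'', which is precisely the content of G\"{o}del's Claim Two. Reasoning within $\mathbf{EA_T}$, I would assume the negation of the first disjunct, i.e.~that the mind can be mechanized, so that there is an $e$ for which $\mathit{Th(M_e)}$ coincides with $\K$. The delicate point, flagged in the footnote to the statement, is that $\K$ is an operator and cannot be quantified into, so this hypothesis must be recast through an arithmetical surrogate: I would fix the $\Sigma^0_1$ formula $F(x)$ that numerates $\mathit{Th(M_e)}$ and express mechanization as the knowable agreement of $\K$ with $F$, in the style of the Reinhardt Theorem quoted above. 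This shifts the burden from the operator $\K$, into which we cannot quantify, to the predicate $F$, over which we are free to quantify.

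The next step is to extract soundness. Axiom $\mathbf{K3}$ gives $\K\phi\rightarrow\phi$, and the Tarskian axioms for $T$ give $T(\ulcorner\phi\urcorner)\leftrightarrow\phi$; combining these with the agreement between $\K$ and $F$ yields $F(\ulcorner\phi\urcorner)\rightarrow T(\ulcorner\phi\urcorner)$ for each $\phi$, and, using the truth predicate to internalise this schema, the single formula $\forall x\,(F(x)\rightarrow T(x))$, that is $\K=\mathit{Th(M_e)}\subseteq\mathbf{Truth}$. The heart of the argument is then Tarski's undefinability of truth, formalised inside $\mathbf{EA_T}$: since $T$ satisfies the Tarskian biconditionals, a diagonal (liar) sentence shows that the extension $\{x:T(x)\}$ cannot coincide with any arithmetically definable set, and in particular not with the $\Sigma^0_1$ set $\{x:F(x)\}$. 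Combined with soundness, this makes the inclusion proper, $\K=\{x:F(x)\}\subsetneq\mathbf{Truth}$, so there is a sentence $\phi$ with $T(\ulcorner\phi\urcorner)$ but $\neg F(\ulcorner\phi\urcorner)$, i.e.~true but not known.

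It then remains to see that $\neg\phi$ is also unknown. Here I would apply soundness once more: since $\phi$ is true, $\neg\phi$ is false, so $\neg T(\ulcorner\neg\phi\urcorner)$, and as $\K\subseteq\mathbf{Truth}$ this forces $\neg\K(\neg\phi)$. Together with $\neg\K\phi$ from the previous step, this exhibits a true yet absolutely undecidable sentence, which is exactly the second disjunct. Since the mechanization hypothesis has led to the second disjunct, the disjunction $\sf GD$ is established in $\mathbf{EA_T}$.

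I expect the main obstacle to lie not in the mathematical core, which is merely soundness together with Tarski's theorem, but in the faithful formalisation that respects the operator status of $\K$. The informal argument quantifies freely over ``true but unknown'' sentences, whereas in $\mathbf{EA_T}$ every occurrence of $\K$ must remain outside the scope of quantifiers; bridging the schematic consequences of $\mathbf{K3}$ and the single quantified formulas demanded by the Tarski step is the genuinely delicate part. Making the surrogate $F$ carry the quantificational load while keeping the relevant equivalences knowable, so that the closure axioms $\mathbf{K1}$--$\mathbf{K4}$ remain applicable, is the technical crux, and it is exactly here that I would lean on the machinery of the two-part Reinhardt Theorem, in particular its device for passing between the operator $\K$ and an arithmetical numeration.
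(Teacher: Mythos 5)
Your proposal is correct and follows essentially the same route as the paper, which establishes $\sf GD$ via the informal argument for Claim Two: if $\K=\mathit{Th(M_e)}$ for some $e$, then $\K$ is recursively enumerable and, by $\mathbf{K3}$, sound, while Tarski's undefinability theorem (formalized with the typed truth predicate and the diagonal lemma) shows the truth set is not arithmetically definable, so some true sentence and its false negation both lie outside $\K$, yielding the second disjunct. One small imprecision: the negation of the first disjunct is the \emph{plain} extensional agreement $\exists e\,(\K=\mathit{Th(M_e)})$, not a \emph{knowable} agreement as you phrase it, but your derivation in fact uses only the plain agreement together with $\mathbf{K3}$, so nothing is lost.
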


Following Reinhardt, we should distinguish three levels of the mechanistic thesis.

\begin{enumerate}[(1)]
  \item The weak mechanistic thesis ($\sf WMT$): $\exists e (\K = \mathit{Th(M_e)})$;
  \item The strong mechanistic thesis ($\sf SMT$): $\K \exists e (\K = \mathit{Th(M_e)})$;
  \item  The super strong mechanistic thesis ($\sf SSMT$): $\exists e \, \K(\K = \mathit{Th(M_e)})$.
\end{enumerate}
Note that $\sf WMT$ is just the first disjunct which says that there is
a Turing machine which coincides with the idealized human mind in the
sense that the two have the same outputs. Note that $\sf SMT$ says that the idealized human mind knows that there is a Turing machine which coincides with the idealized human mind. Note that $\sf SSMT$ says that there is a particular Turing machine
such that the idealized human mind knows that that particular machine
coincides with the idealized human mind.

%Suppose $\sf WMT$ holds. Then $\K=\mathit{Th(M_e)}$ for some $e$. But $\K$ can be enumerated by which machine?
%Suppose someone hands us the p.r.~ function $\phi$ such that $\K$ is the domain of $\phi$. The question is whether we can find, in an effective way, an index $e$ such that $\varphi_e=\phi$. The answer is No: this follows from Rice's theorem\footnote{Let $C$ be any class of partial recursive functions. Rice theorem says that $\{n: \varphi_n\in C\}$ is recursive if and only if $C=\emptyset$ or $C$ is the class of all partial recursive functions where $\varphi_n$ is the partial recursive function with index $n$.} by taking $C$ to be the singleton containing $\phi$.\footnote{I.e.~ $A$ is not recursive by Rice theorem where $A=\{e: \K=\mathit{Th(M_e)}\}$.}

Suppose $\sf WMT$ holds. Then there exists an $e^{\ast}$ such that in fact $\K = \mathit{Th(M_{e^{\ast}})}$. It might seem at first that if we know that there $\textit{is}$ such an $e^{\ast}$ then we will be able to \textit{find}, in a computable way, the indices $e$ such that $\K = \mathit{Th(M_{e})}$. But this is an illusion, as demonstrated by Rice's Theorem, which we shall now explain.

In recursion theory, the sets $\mathit{Th(M_{e})}$ are known as \textit{computably enumerable} sets. Each such set is the domain of a \textit{partial} computable function $\varphi_e$. Rice's Theorem states that for any class $C$ of partial computable functions, $\{e :\varphi_e\in C\}$ is computable iff either $C=\emptyset$ or $C$ is the class of \textit{all} partial computable functions. Now consider the set of indices that we are interested in, namely, $\{e: \K = dom(\varphi_e)\}$, that is, $\{e: \varphi_e\in C\}$ where $C=\{\varphi_e: \K=dom(\varphi_e)\}$. It follows immediately from Rice's theorem that $\{e: \K = dom(\varphi_e)\}$ is not computable.

The following theorem shows that we can prove in $\mathbf{EA_T}$ that there does not exist a particular Turing
machine such that  the idealized human mind knows that that particular Turing machine  coincides with the idealized human mind.

\begin{theorem}[Reinhardt, \cite{Reinhardt 32}]\label{SSMT inconsistent}
$\mathbf{EA_T} + \sf SSMT$ is inconsistent.
\end{theorem}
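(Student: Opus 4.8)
The plan is to show that $\sf SSMT$ forces the idealized mind to \emph{know} a reflection (soundness) principle for one \emph{fixed} formal system, and then to feed this into the theorem of Reinhardt \cite{Reinhardt 32} stated above, which is precisely a second-incompleteness phenomenon for the operator $\K$. The whole argument can be carried out inside $\mathbf{EA_T}$ (indeed $\mathbf{EA}$ would suffice for the derivations themselves, but in $\mathbf{EA_T}$ the typed truth predicate is available to make sense of the identity below).

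I would begin by assuming $\mathbf{EA_T}\vdash \sf SSMT$ and fixing a \emph{standard} witness $e$, so that $\mathbf{EA_T}\vdash \K(\K = \mathit{Th(M_e)})$. Because $e$ is a definite numeral, $\mathit{Th(M_e)}$ is a single recursively enumerable theory; let $F(x)$ be its canonical provability predicate, so that ``$\phi\in\mathit{Th(M_e)}$'' is rendered by $F(\ulcorner\phi\urcorner)$ and the identity $\K = \mathit{Th(M_e)}$ unpacks, for each sentence $\phi$, into the biconditional $\K\phi \leftrightarrow F(\ulcorner\phi\urcorner)$. The first thing to verify is that, since $\K$ distributes across logical derivations (by $\mathbf{K2}$), from $\K(\K = \mathit{Th(M_e)})$ one can extract, for every sentence $\phi$, \emph{both} $\mathbf{EA_T}\vdash \K\bigl(\K\phi \to F(\ulcorner\phi\urcorner)\bigr)$ and $\mathbf{EA_T}\vdash \K\bigl(F(\ulcorner\phi\urcorner)\to \K\phi\bigr)$.

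Next I would produce knowable soundness of $F$. The operator knows its own correctness axiom, i.e. $\K\bigl(\K\phi\to\phi\bigr)$ is available (it is $\K$ applied to $\mathbf{K3}$). Chaining this, under the operator, with the extracted implication $\K\bigl(F(\ulcorner\phi\urcorner)\to\K\phi\bigr)$ yields $\mathbf{EA_T}\vdash \K\bigl(F(\ulcorner\phi\urcorner)\to\phi\bigr)$ for every sentence $\phi$; that is, the mind knows that $\mathit{Th(M_e)}$ is sound. This is exactly the hypothesis of part~(1) of Reinhardt's theorem with our fixed $F$, so there is a sentence $\varphi$ with $\mathbf{EA_T}\vdash \K\varphi \wedge \K\neg F(\ulcorner\varphi\urcorner)$. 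Now I close the loop using the \emph{other} direction of the identity: from $\K\varphi$ and $\mathbf{K4}$ we get $\K\K\varphi$, and with $\K\bigl(\K\varphi\to F(\ulcorner\varphi\urcorner)\bigr)$ and $\mathbf{K2}$ we obtain $\K F(\ulcorner\varphi\urcorner)$, whence $F(\ulcorner\varphi\urcorner)$ by $\mathbf{K3}$; but $\K\neg F(\ulcorner\varphi\urcorner)$ also gives $\neg F(\ulcorner\varphi\urcorner)$ by $\mathbf{K3}$, a contradiction. Hence $\mathbf{EA_T}+\sf SSMT$ is inconsistent. (Part~(2) of Reinhardt's theorem gives an essentially equivalent route: it yields $\K\neg\K(\mathbf{Con}(F))$ directly, while the knowable soundness of $F$ specialized to $\mathbf{0}\neq\mathbf{0}$ yields $\K(\mathbf{Con}(F))$, and $\mathbf{K3}$ turns the two into a contradiction.)

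The genuinely delicate step is the extraction in the second paragraph, for exactly the reason flagged in the footnote to $\sf GD$: since $\K$ is an operator rather than a predicate, the identity $\K = \mathit{Th(M_e)}$ has no literal reading, and one must be careful about how it is formalized (this is where the typed truth predicate of $\mathbf{EA_T}$ does its work, internalizing the extension of $\K$ so that the comparison with the arithmetically defined $\mathit{Th(M_e)}$ is meaningful) and about the fact that all subsequent reasoning stays legitimately inside $\K$, using only $\mathbf{K1}$--$\mathbf{K4}$ and propositional logic under the operator. It is also worth stressing where the hypothesis is used in full strength: the argument needs $e$ to be a \emph{fixed} numeral so that $F$ is one definite provability predicate, and it needs \emph{both} inclusions of the identity (the direction $F\Rightarrow\K$ to supply Reinhardt's soundness hypothesis, and $\K\Rightarrow F$ to derive the final contradiction). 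This is precisely why the same reduction does not collapse $\sf SMT = \K\exists e(\K=\mathit{Th(M_e)})$, where no single $F$ can be named.
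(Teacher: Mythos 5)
The paper states this theorem without proof, citing Reinhardt, so there is no in-text argument to compare against; your proposal reconstructs what is essentially the standard Reinhardt--Koellner derivation, and its logical skeleton is right: unpack $\K=\mathit{Th(M_e)}$ via the typed truth predicate into the two families of known implications, combine the direction $F\Rightarrow\K$ with $\K$ of $\mathbf{K3}$ to get known soundness of $\mathit{Th(M_e)}$, feed that into part (1) (or, more directly, combine known consistency with part (2)) of the quoted Theorem~4.2, and close the contradiction with $\mathbf{K4}$, $\mathbf{K2}$, $\mathbf{K3}$ and the direction $\K\Rightarrow F$. Your identification of where $\sf SSMT$ is used in full strength, and of why the argument does not touch $\sf SMT$, is exactly the right diagnosis.

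One step is stated in a way that does not literally work: ``fixing a \emph{standard} witness $e$, so that $\mathbf{EA_T}\vdash\K(\K=\mathit{Th(M_e)})$.'' From the consistency-theoretic standpoint you cannot assume the existential quantifier in $\sf SSMT=\exists e\,\K(\K=\mathit{Th(M_e)})$ is witnessed by a numeral; in a putative model of $\mathbf{EA_T}+\sf SSMT$ the witness may be nonstandard, and $\mathbf{EA_T}+\sf SSMT\vdash\bot$ does not decompose into $\mathbf{EA_T}+\K(\K=\mathit{Th(M_{\overline{n}})})\vdash\bot$ for some particular $n$. The correct formulation is to prove $\mathbf{EA_T}\vdash\forall e\,\neg\K(\K=\mathit{Th(M_e)})$, reasoning uniformly in a free variable $e$; this requires the parametric versions of the tools you invoke (the diagonal lemma with parameter, and the internalized second incompleteness theorem provable in $\PA$ uniformly in an index for the r.e.\ theory), rather than the schematic statement of Theorem~4.2, which quantifies over a fixed formula $F(x)$ and external sentences $\phi$. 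The uniform versions do hold and the argument goes through, so this is a repairable imprecision rather than a fatal flaw, but it is precisely the kind of point a complete proof must address, alongside the operator-versus-predicate issue you already flag.
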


The following theorem shows that, from the viewpoint of $\mathbf{EA_T}$ it is possible that
the idealized human mind is in fact a Turing machine. From Theorem \ref{SSMT inconsistent}, it just cannot know
which one.

\begin{theorem}[Reinhardt \cite{Reinhardt 33}]\label{WMT}
$\mathbf{EA_T} + \sf WMT$ is consistent.
\end{theorem}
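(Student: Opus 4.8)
The plan is to prove consistency by producing a single model of $\mathbf{EA_T}$ in which $\sf WMT$ is true. A preliminary step is to fix how $\sf WMT$ is to be read as a genuine sentence of $L(\mathbf{EA_T})$: since $\K$ is an operator we are forbidden from quantifying into it, so the identity ``$\K=\mathit{Th(M_e)}$'' has to be expressed at the object level through the typed truth predicate, e.g.\ as $\exists e\,\forall \phi\,\big(T(\ulcorner \K\phi\urcorner)\leftrightarrow \phi\in \mathit{Th(M_e)}\big)$, with $\phi$ ranging over codes of $L(\mathbf{EA})$-sentences. The extension of $\K$ is thus accessed only via $T\circ\K$, and the task reduces to building a model $\mathcal{M}\models\mathbf{EA_T}$ whose ``known set'' $\K^{\mathcal{M}}:=\{\phi:\mathcal{M}\models T(\ulcorner\K\phi\urcorner)\}$ is recursively enumerable, so that it coincides with some $\mathit{Th(M_e)}$.

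For the construction I would interpret $\K$ by a concrete r.e.\ set $E$ of $L(\mathbf{EA})$-sentences, defined as a fixed point: let $E_0$ be the deductive closure (treating each $\K\psi$ as an atom) of $\PA$ in $L(\mathbf{EA})$ together with the schemata $\mathbf{K1}$--$\mathbf{K4}$, set $E_{n+1}$ to be the deductive closure of $E_n\cup\{\K\psi:\psi\in E_n\}$, and put $E=\bigcup_n E_n$. This $E$ is r.e., being the limit of an increasing, uniformly effective chain. I would then take $\mathcal{M}$ to have the standard arithmetic part, interpret the operator $\K$ by $E$, and interpret $T$ as the genuine typed truth set of $\mathcal{M}$ restricted to $L(\mathbf{EA})$; the latter is well defined precisely because $T$ is \emph{typed} and so never applies to formulas containing $T$, which blocks the usual self-referential collapse. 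By construction $\K^{\mathcal{M}}=E$ is r.e., so $\sf WMT$ holds in $\mathcal{M}$.

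It then remains to verify the axiom groups of $\mathbf{EA_T}$ in $\mathcal{M}$: full induction in $L(\mathbf{EA})$ (immediate from standardness of the arithmetic part), $\mathbf{K1}$ and $\mathbf{K2}$ (from closure of $E$ under logic and modus ponens), $\mathbf{K4}$ together with the whole boxed layer $\K\Sigma$ (from closure of $E$ under necessitation), and the Tarskian axioms (immediate, since $T$ is literal truth). The crux, and the step I expect to be the main obstacle, is $\mathbf{K3}$ together with its own knowledge $\K(\K\phi\to\phi)$: one must show that $E$ is \emph{sound} for $\mathcal{M}$, i.e.\ $E\subseteq\{\psi:\mathcal{M}\models\psi\}$, while $E$ simultaneously contains every reflection instance $\K\phi\to\phi$; and since truth in $\mathcal{M}$ itself depends on $E$, this is a genuinely self-referential soundness condition. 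The tension is that, were $\K$ the honest arithmetized provability predicate of a recursive axiom set, L\"{o}b's theorem would force a collapse---indeed it is exactly this diagonal phenomenon that renders $\mathbf{EA_T}+\sf SSMT$ inconsistent in Theorem \ref{SSMT inconsistent}. The escape, which the fixed-point definition of $E$ must be shown to realize, is that the mechanistic identity $\K=\mathit{Th(M_e)}$ is only \emph{true} in $\mathcal{M}$ and never falls under the scope of $\K$; hence the index $e$ cannot be imported into a $\K$-context, the internal diagonal argument behind Theorem \ref{SSMT inconsistent} cannot be run, and self-referential soundness of $E$ can be maintained. Establishing that closing under $\mathbf{K1}$--$\mathbf{K4}$ and necessitation never drives a sentence false in $\mathcal{M}$ into $E$ is the heart of the matter, and is where one invokes Reinhardt's fixed-point construction.
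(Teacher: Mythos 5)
The paper itself offers no proof of this theorem --- it is quoted from Reinhardt --- so your proposal can only be judged on its own terms, and on those terms it has a genuine gap, one you yourself flag as ``the heart of the matter'' and then do not close. Your set $E$ (the closure of $\PA$ plus $\mathbf{K1}$--$\mathbf{K4}$ under deduction and necessitation) is the right r.e.\ candidate, and your reading of $\sf WMT$ through the truth predicate is correct. But interpreting the operator $\K$ by \emph{bare membership in $E$} makes the truth of each $\mathbf{K3}$-instance $\K\phi\to\phi$ in $\mathcal{M}$ literally equivalent to the soundness of $E$ for $\mathcal{M}$, and no induction on your stages $E_n$ (or on proof length) can establish this: the $\mathbf{K3}$-instances all sit in $E_0$, yet the truth of the instance for $\phi$ depends on whether $\phi$ enters $E$ at \emph{any} later stage and is true there. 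The observation that the index $e$ never falls under the scope of $\K$ explains why no L\"ob-style refutation is available (i.e.\ why $\sf SSMT$ fails while $\sf WMT$ need not), but it is only the absence of an obstruction, not a proof of the self-referential soundness condition.

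The missing idea is to change the interpretation of $\K$: interpret $\K\psi$ as ``$\psi\in E$ \emph{and} $\psi$ is true in the model'', which is well defined by recursion on the $\K$-depth of formulas (the truth of $\K\psi$ reduces to the absolute fact $\psi\in E$ together with the truth of the lower-depth sentence $\psi$), so there is no circularity. Under this reading $\mathbf{K3}$ becomes trivially true, $\mathbf{K1}$ and $\mathbf{K2}$ follow from $E$ containing the validities and being closed under modus ponens, and $\mathbf{K4}$ follows from closure of $E$ under necessitation together with the truth hypothesis already contained in the antecedent $\K\phi$. Having verified that every axiom of $\Sigma\cup\K\Sigma$ is true, one concludes that every theorem is true, i.e.\ $E$ is sound after all; only \emph{then} does the extension of $\K$ collapse to $E$ itself, which is r.e., so $\sf WMT$ holds with $\mathit{Th(M_e)}=E$. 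In other words, the soundness of $E$ is obtained as a corollary of building the ``provable-and-true'' model, not as a prerequisite for building the ``provable'' model; starting from the latter, as you do, leaves the key step unprovable by the means you describe.
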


From Theorem \ref{WMT}, the first disjunct is not provable  in
$\mathbf{EA_T}$.
%G\"{o}del had hoped that when we had an adequate resolution of the paradoxes¡ªmost notably an adequate type-free (the truth predicate can apply to sentences involving the truth predicate) theory of truth and absolute provability¡ªwe would be in a position to establish the first disjunct.\footnote{See \cite{}.}
But G\"{o}del did think that one day we would be in a position to prove the first disjunct, and what was missing, as he saw it, was an adequate resolution of the paradoxes involving self-applicable concepts like the concept of truth. G\"{o}del thought
that ``[i]f one could clear up the intensional paradoxes somehow, one would
get a clear proof that mind is not machine".\footnote{This quotation is from Hao Wang's reconstruction of his conversations with G\"{o}del. See Wang \cite{Hao Wang 96}, p. 187.}

%Reinhardt conjectured that
%even SMT is consistent with $EA_T$, and Carlson proved this conjecture, via a
%sophisticated construction:

The following technical theorem from Carlson shows that, from the point of view of $\mathbf{EA_T}$, it is  possible that
the idealized human mind knows that it is a Turing machine: it just cannot know which one.

\begin{theorem}[Carlson, \cite{Carlson 00}]\label{Theorem 4.3}
$\mathbf{EA_T} + \sf SMT$ is consistent.
\end{theorem}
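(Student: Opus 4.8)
The plan is to prove consistency by a relative interpretation: I would fix a host theory known to be consistent --- $\PA$ together with a Tarskian truth predicate for $L(\mathbf{EA})$ suffices --- and define a translation of $L(\mathbf{EA_T})$ into it sending the operator $\K$ to a provability-style predicate $\mathbf{Pr}_U$ for a recursively enumerable theory $U \supseteq \mathbf{EA_T}$, with $U$ specified self-referentially via the recursion theorem, so that the translations of all axioms of $\mathbf{EA_T}+\sf SMT$ become theorems of the host. The payoff of interpreting $\K$ by $\mathbf{Pr}_U$ is that $\mathbf{Pr}_U$ is $\Sigma^0_1$, so the knowable sentences really are the theorem set of some Turing machine; and because $U$ is r.e.\ it can prove this fact about itself, i.e.\ $U \vdash \exists e\,(\K = \mathit{Th(M_e)})$. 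Applying $U$'s own provability once more yields the outer box, so under this interpretation $\sf SMT = \K\exists e\,(\K = \mathit{Th(M_e)})$ comes out as a theorem. The routine modal axioms are then checked against the derivability conditions: $\mathbf{K1}$ (logical validities are known) is immediate, $\mathbf{K2}$ (closure under modus ponens) follows from deductive closure of $U$ as in $\mathbf{D2}$, and $\mathbf{K4}$ (positive introspection $\K\phi \to \K\K\phi$) is exactly provable $\Sigma^0_1$-completeness, i.e.\ $\mathbf{D3}$.

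The serious obstacle is the reflection axiom $\mathbf{K3}$, namely $\K\phi \to \phi$. If $\K$ were plain provability in a fixed consistent r.e.\ theory, then $U \vdash (\mathbf{Pr}_U(\ulcorner\phi\urcorner)\to\phi)$ for every $\phi$ would, by L\"{o}b's theorem, force $U$ to prove each $\phi$, contradicting consistency. So $\K$ must be interpreted not as bare provability but as a \emph{correct} provability notion. The device I would use, following Carlson, is to define $U$ together with its knowledge predicate by a transfinite recursion (equivalently, by a stability condition): a sentence is admitted as known only at a stage at which it is already guaranteed true in the intended standard interpretation. Soundness of the construction, maintained at every stage, is what validates $\mathbf{K3}$; this is where the bulk of the work --- an ordinal analysis showing the recursion closes off consistently --- resides.

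The delicate point, and the real heart of the matter, is to secure $\mathbf{K3}$ and $\sf SMT$ \emph{together} without inadvertently proving the super-strong thesis $\sf SSMT = \exists e\,\K(\K = \mathit{Th(M_e)})$, which is inconsistent with $\mathbf{EA_T}$ by Theorem~\ref{SSMT inconsistent}. The gap between the two is purely a matter of quantifier order: $\sf SMT$ asserts knowledge that \emph{some} coinciding index exists, whereas $\sf SSMT$ supplies a \emph{specific} knowable index $e$. A specific $e$ is fatal, because one can then form $\mathbf{Con}(\mathit{Th(M_e)})$ and run the Reinhardt diagonalization of \cite{Reinhardt 32} --- the second clause of the cited theorem, which yields $\K\neg\K\mathbf{Con}$ --- against the soundness one also possesses, reproducing $\sf G2$ internally. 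Under a bare $\Sigma^0_1$ existential no such witness is available to diagonalize against, so the construction must be arranged so that, although $\K\exists e\,(\K = \mathit{Th(M_e)})$ holds, for no numeral $\overline{e}$ is $\K(\K = \mathit{Th(M_{\overline{e}})})$ forced. Engineering this non-uniformity --- the mind provably knowing that it is some machine while provably never being pinned to one --- is the step I expect to be hardest, and it is precisely what Carlson's stability construction is designed to deliver, cleanly separating Theorem~\ref{Theorem 4.3} from the inconsistent case of Theorem~\ref{SSMT inconsistent} while remaining compatible with the consistency of $\sf WMT$ in Theorem~\ref{WMT}.
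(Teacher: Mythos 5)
First, a point of reference: the paper itself gives no proof of this theorem --- it simply cites Carlson \cite{Carlson 00}, whose argument occupies a substantial technical paper. So what you have written can only be judged as a roadmap, and as a roadmap it is impressively well-aimed: you correctly identify the two genuine obstacles (the L\"{o}b obstruction to validating $\mathbf{K3}$ under a bare provability reading of $\K$, and the need to secure $\K\exists e\,(\K=\mathit{Th(M_e)})$ without ever forcing $\K(\K=\mathit{Th(M_{\overline{e}})})$ for a numeral $\overline{e}$, which Theorem \ref{SSMT inconsistent} forbids). But the proposal is not a proof, because everything that actually resolves these obstacles is invoked by name rather than constructed. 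The phrase ``a transfinite recursion (equivalently, a stability condition) \dots an ordinal analysis showing the recursion closes off consistently'' is a placeholder for the entire content of Carlson's paper, which builds the knowledge set through a stratified hierarchy of theories indexed by ordinals and establishes closure via his machinery of $\Sigma_1$-elementary substructures of the ordinals (``patterns of resemblance''). Nothing in your sketch explains why such a recursion reaches a fixed point that is simultaneously true, deductively closed, recursively enumerable, and closed under $\K$-necessitation of all the axioms including $\mathbf{K3}$ itself.

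There is also an internal tension you do not discharge. Your first paragraph verifies $\mathbf{K1}$, $\mathbf{K2}$, $\mathbf{K4}$ against $\mathbf{D1}$--$\mathbf{D3}$ for the interpretation $\K\mapsto\mathbf{Pr}_U$; your second paragraph then abandons that interpretation (correctly, because of L\"{o}b), but never re-establishes those axioms for the stability-based interpretation that replaces it. More pointedly, the claim that ``because $U$ is r.e.\ it can prove $\exists e\,(\K=\mathit{Th(M_e)})$'' is exactly where the danger lies: the natural proof of that existential exhibits the index of $U$, which would yield $\sf SSMT$ and hence inconsistency. Getting the mind to know the bare existential while provably never knowing a witness is not a side condition to be ``arranged''; it is the theorem, and your sketch offers no mechanism for it. Finally, the opening claim that $\PA$ plus a Tarskian truth predicate for $L(\mathbf{EA})$ suffices as a host theory is asserted without justification and is not obviously compatible with the strength of the ordinal analysis you later concede is required.
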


Now we give a summary for the question whether G\"{o}del's incompleteness theorems imply the first disjunct.
The incompleteness theorems imply that $\neg\exists e \, \K(\K =
\mathit{Th(M_e)})$. But from Theorem \ref{WMT}, it does not follow that $\neg\exists e (\K = \mathit{Th(M_e)})$; and from Theorem \ref{Theorem 4.3}, it does not even follow that $\neg K\exists e \, (\K = \mathit{Th(M_e)})$. The difference between $\exists e\, \K$ and $\K\exists e$ before $\K = \mathit{Th(M_e)}$ is essential. Assuming the principles embodied in $\mathbf{EA_T}$, it is possible to know that we are a Turing machine (i.e. $\K\exists e (\K = \mathit{Th(M_e)})$); it is
just not possible for there to be a Turing machine such that we know that we are
that Turing machine (i.e. $\exists e \,\K (\K = \mathit{Th(M_e)}))$.

Penrose proposed a new argument for the first disjunct in \cite{Penrose 94,Penrose 11}.
Penrose's new argument is the most sophisticated
and promising argument for the first disjunct. It has been extensively discussed and carefully analyzed in the literature (see Chalmers \cite{Chalmers 95}, Feferman \cite{Feferman 1995}, Lindstr\"{o}m \cite{Per 01,Per 06}, and Shapiro \cite{Incompleteness,Shapiro Mechanism},  Gaifman \cite{Gaifman 2000} and Koellner \cite{Peter 16,Peter 18 second}, etc).
The question of whether Penrose's new argument establishes the first disjunct is quite subtle.
Penrose's new argument involves treating truth as type-free, and so for the analysis and formalization of Penrose's new argument, we need to employ type-free notions of  truth. However, we now have many
type-free theories of truth and there is no consensus as to which option is best. Koellner was the first to discuss Penrose's new argument in the context of type-free truth. And he shows that when one shifts to a type-free notion of truth then one can treat $\K$ as a predicate (as a contrast, in the  context of $\mathbf{EA}$ and $\mathbf{EA_T}$, $\K$ cannot be treated as a predicate).

 %Unlike Lucas and Penrose, \Godel \, and Kreisel were
 %aware that even if they are right about the non-mechanical nature of the
 %human mind, there is still a burden to show that the human mind can out-perform
 %any machine (Shapiro 1998).
 %And it turns out that to formalize Penrose's new
%argument one must employ a type-free theory of truth.

 %In order to reconstruct Penrose's argument, Koellner (2016) devises a system for truth and knowability where truth is
%treated as type-free and
In the literature, Koellner  proposed the framework $\mathbf{DTK}$ which employs Feferman's type-free theory of determinate truth $\mathbf{DT}$ and some additional axioms governing $\K$ to the axioms of $\mathbf{DT}$.\footnote{For the details of the system $\mathbf{DT}$ and $\mathbf{DTK}$, see \cite{Peter 16,Peter 18 second}.}
The following results about the system $\mathbf{DTK}$ are due to Koellner.
From \cite{Peter 16, Peter 18 second}, $\mathbf{DTK}$ is consistent (see \cite[Theorem 7.14.1]{Peter 16}) and $\mathbf{DTK}$ proves $\sf GD$ (see \cite[Theorem 7.15.3]{Peter 16}).
However, the particular argument  Penrose gives for the first
disjunct fails in the context of $\mathbf{DTK}$ (see \cite[Theorem 4.1]{Peter 18 second}).
Moreover, even if we restrict the first and second disjunct to arithmetic statements, $\mathbf{DTK}$ can neither prove nor refute either the first disjunct or the second disjunct (see \cite[Theorem 7.16.1-7.16.2]{Peter 16}).  From the point of view of $\mathbf{DTK}$, it is in principle impossible
to prove or refute either disjunct. Koellner concluded  that ``Since the statements that ``the mind cannot be mechanized"
 and ``there are absolutely undecidable statements"
are independent of the natural principles governing the fundamental
concepts and, moreover, are independent of any plausible principles
in sight, it seems likely that these statements are themselves ``absolutely
undecidable"" (Koellner, \cite{Peter 18 second}, p. 469).\footnote{Koellner concluded in \cite{Peter 18 second} with a disjunctive conclusion of his
own: ``Either the statements that ``the mind cannot be mechanized"
and ``there are absolutely undecidable statements" are indefinite (as the philosophical
critique maintains) or they are definite and the above results and considerations
provide evidence that they are about as good examples of ``absolutely
undecidable" propositions as one might find" (\cite{Peter 18 second}, p. 480).}

%whether in DTK there
%can be any argument for the first disjunct, and, more generally, whether
%in DTK there can be any argument for or against either disjunct
 %Which disjunct holds?
%In particular, can we go further and establish that the ?rst disjunct is a
%¡°mathematically established fact¡±?

In our previous discussion of $\sf GD$, the first disjunct and the second disjunct, we identified absolutely undecidability with knowability of the idealized human mind and define that $\phi$ is absolutely undecidable if $\phi\notin \K$ and $\neg\phi\notin \K$. Under this framework, the second disjunct  is equivalent to ``$\K$ is not complete". Under the assumption that $\K \subseteq \textbf{Truth}$, the second disjunct  is equivalent to ``$\K \subsetneq \textbf{Truth}$". However, $\sf G1$ only tells us that $\textbf{Prov}\subsetneq\textbf{Truth}$, and it does not tell us that $\K \subsetneq \textbf{Truth}$.

Another  natural informal definition of  absolutely undecidability is: $\phi$ is absolutely undecidable if there is no   consistent extension $T$ of $\mathbf{ZFC}$ with well-justified axioms such that $\phi$ is provable in $T$.  In this paper, we  focus on whether G\"{o}del's incompleteness theorem implies that the human mind cannot be mechanized. In philosophy of set theory, there are extensive discussions about wether there exists an absolutely undecidable statement in set theory. For a detailed discussion of  the question of absolutely undecidability in set theory and especially whether the Continuum Hypothesis is absolutely undecidable, we refer to Koellner \cite{Peter 06}.

%Let $Output(e:\mathcal{M})$ denote the output of the program $e$ generated within the universe $\mathcal{M}$. A recent remarkable theorem by W.Hugh Woodin shows that there exists a Turing machine with some universal properties as the following theorem shows.
%We leave the philosophical meaning of this theorem for future examinations.

%\begin{theorem}[\cite{Woodin 11}]\label{}~
%There exists $e \in \mathbb{N}$ such that for all countable models $\mathcal{M} \models \mathbf{PA}$:
%\begin{enumerate}[(1)]
  %\item $\mathbf{PA}\vdash \mathbf{Con}(\mathbf{PA})\leftrightarrow Output(e:\mathcal{M})=\emptyset$;
  %\item if $s = Output(e: \mathcal{M})$, and if $t$ is an internal binary sequence of $\mathcal{M}$ such that $s$ is
%a proper initial segment of $t$, then there exists a countable model $\mathcal{N} \models \mathbf{PA}$ such
%that $\mathcal{M}$ is a proper initial segment of $\mathcal{N}$ and
%$Output(e: \mathcal{N}) = t$.
%\end{enumerate}
%\end{theorem}

\section{G\"{o}del's Undemonstrability of  Consistency Thesis and the definability of natural numbers}

In Section 8, Krajewski    \cite{Krajewski 2019} discussed  two consequences of G\"{o}del's incompleteness theorem directly related to the Anti-Mechanist Arguments: G\"{o}del's Undemonstrability of  Consistency Thesis  and the undefinability of natural numbers.
For us, Krajewski's discussion on these two consequences is mainly philosophical and not very precise. In this section, we want to give a more precise   logical analysis of  G\"{o}del's Undemonstrability of Consistency Thesis and the undefinability of natural numbers.

Let us first examine the definability of natural numbers.
As a consequence of G\"{o}del's incompleteness theorem, Krajewski    \cite{Krajewski 2019} claimed that we can not define the natural numbers in the sense that there is not a complete axiomatic system which fully characterizes all truths about natural numbers. We give some supplementary notes to make this point more precise.

Firstly,
whether a theory about natural numbers is complete depends on the language of the theory. In the languages  $\, L(\mathbf{0}, \mathbf{S})$, $L(\mathbf{0}, \mathbf{S}, <)$  and  $L(\mathbf{0}, \mathbf{S}, <, +)$, there are, respectively, recursively axiomatized complete arithmetic theories (see Section 3.1-3.2 in \cite{Enderton 2001}).
For example, Presburger arithmetic is a complete theory of the arithmetic of addition in the language  $L(\mathbf{0}, \mathbf{S}, +)$ (see Theorem 3.2.2 in \cite{metamathematics}, p. 222). However, if a recursively axiomatized theory contains enough information about addition and multiplication, then it is incomplete and hence it must miss some truths about arithmetic. For example, any recursively axiomatized consistent extension of $\mathbf{Q}$ is incomplete. Thus, in Krajewski's sense, we can not define the natural numbers in any recursively axiomatized consistent extension of $\mathbf{Q}$.

%Finally, containing the arithmetic of multiplication is not sufficient for a theory to be incomplete. For example, there exists a complete recursively axiomatized theory in the language of $L(\mathbf{0}, \times)$ (see \cite{metamathematics}, p.230).

Secondly, if we discuss the definability of a set with respect to a structure, then the definability of natural numbers depends on the structure we talk about.
It is well known that $\mathbb{N}$ is definable in $(\mathbb{Z}, +, \cdot)$ and $(\mathbb{Q}, +, \cdot)$ (see Chapter XVI in \cite{Epstein 2011}), and  $\mathit{Th(\mathbb{N}, +, \cdot)}$ is interpretable in $\mathit{Th(\mathbb{Z}, +, \cdot)}$ and $\mathit{Th(\mathbb{Q}, +, \cdot)}$.
Since $\mathit{Th(\mathbb{N}, +, \cdot)}$ is undecidable,\footnote{I.e.~ there does not exist an effective algorithm such that given any sentence $\phi$ in $L(\mathbf{PA})$, we can effectively decide whether $\langle\mathbb{N}, +, \cdot\rangle\models \phi$ or not.} by Theorem \ref{interpretable theorem}, $\mathit{Th(\mathbb{Z}, +, \cdot)}$ and $\mathit{Th(\mathbb{Q}, +, \cdot)}$  are all undecidable and hence not recursive axiomatizable. But $\mathit{Th(\mathbb{R}, +, \cdot)}$ is a decidable, recursively axiomatizable  theory (even if not finitely axiomatizable) and  $\mathit{Th(\mathbb{R}, +, \cdot)}=\mathbf{RCF}$ (the theory of real closed field) (see \cite{Epstein 2011}, p. 320-321).
As a corollary, $\mathbb{N}$ is not definable in the structure $\langle\mathbb{R}, +, \cdot\rangle$ (if  $\mathbb{N}$ is  definable in  $\langle\mathbb{R}, +, \cdot\rangle$, then $\mathit{Th(\mathbb{N}, +, \cdot)}$ is interpretable in $\mathit{Th(\mathbb{R}, +, \cdot)}$ and thus, by Theorem \ref{interpretable theorem}, $\mathit{Th(\mathbb{R}, +, \cdot)}$ is undecidable which leads to a contradiction).
In  summary, if we consider matters of definability relative to the base structure, then
whether the set of natural numbers is definable depends on the base structure: $\mathbb{N}$ is definable in $\langle\mathbb{Z}, +, \cdot\rangle$ and $\langle\mathbb{Q}, +, \cdot\rangle$, but $\mathbb{N}$ is not definable in  $\langle\mathbb{R}, +, \cdot\rangle$.

%It is a misinterpretation or illusion that since $Th(\mathbb{N}, +, \cdot)$ is  undecidable and $\mathbb{N}\subseteq\mathbb{Z}\subseteq\mathbb{Q}\subseteq\mathbb{R}$,  $Th(\mathbb{Z}, +, \cdot)$, $Th(\mathbb{Q}, +, \cdot)$ and $Th(\mathbb{R}, +, \cdot)$ are all undecidable.

Now we examine G\"{o}del's Undemonstrability of Consistency Thesis (i.e.~ ${\sf G2}$).
The intensionality of G\"{o}del sentence and the consistency sentence has been widely discussed in the literature (e.g.~ Feferman \cite{Feferman 60}, Halbach-Visser \cite{Halbach 2014a, Halbach 2014b}, Visser \cite{Visser 11}). Halbach and Visser examined the sources of intensionality in the construction of self referential sentences of arithmetic  in \cite{Halbach 2014a, Halbach 2014b} and argued that corresponding to the three stages of the construction of self referential sentences of arithmetic, there are at least three sources
of intensionality: coding, expressing a property and  self-reference.
Visser \cite{Visser 11} located three
sources of indeterminacy in the formalization of a consistency statement for a
theory $T$:
\begin{enumerate}[(I)]
  \item the choice of a proof system;
  \item  the choice of a way of numbering;
  \item the choice of a specific formula numerating the axiom set of $T$.
\end{enumerate}
In summary, the intensional nature ultimately traces back to the various parameter choices that one has to make in arithmetizing the provability predicate. That is the source of both the intensional nature of the G\"{o}del sentence and the consistency sentence.

For a consistent theory $T$, we say that $\sf G2$ holds for $T$ if the consistency statement of $T$ is not provable in $T$. However, this definition is vague, and whether $\sf G2$ holds for $T$ depends on how we formulate the consistency statement. We refer to this phenomenon as the intensionality of $\sf G2$.
Both mathematically and philosophically, $\sf G2$ is  more problematic than $\sf G1$.
The difference between $\sf G1$ and $\sf G2$ is that in the case of $\sf G1$ we are mainly interested in the fact that it shows that \emph{some} sentence is undecidable if $\mathbf{PA}$ is $\omega$-consistent. We make no claim to the effect that that sentence ``really" expresses what we would express by saying ``$\mathbf{PA}$ cannot prove this sentence".\footnote{I would also like to thank the referee for pointing out this difference between $\sf G1$ and $\sf G2$.} But in the case of $\sf G2$ we are also interested in the content of the statement.

The status of $\sf G2$ is essentially different from $\sf G1$ due to the intensionality of $\sf G2$.
We can say that ${\sf G1}$ is extensional in the sense that we can construct a concrete independent mathematical statement without referring to  arithmetization and provability predicate.
However, ${\sf G2}$ is intensional and ``whether ${\sf G2}$ holds for $T$" depends on varied factors as we will discuss.

In the following, we give a very brief discussion of the intensionality  of $\sf G2$ (we refer to \cite{Cheng 19-2} for more details).
In this section, unless otherwise stated, we make the following assumptions:
\begin{enumerate}[(1)]
  \item The theory $T$ is a recursively axiomatized consistent  extension of  $\mathbf{Q}$;
  \item The canonical arithmetic formula to express the consistency of  $T$ is $\mathbf{Con}(T)\triangleq \neg \mathbf{Pr}_T(\ulcorner\mathbf{0}\neq \mathbf{0}\urcorner)$;
  \item  The canonical numbering we use is G\"{o}del's numbering;
  \item The provability predicate we use is standard;
  \item The formula numerating the axiom set of $T$ is $\Sigma^0_1$.
\end{enumerate}

Based on works  in the literature, we argue that ``whether $\sf G2$ holds for $T$" depends on the following factors:
\begin{enumerate}[(1)]
\item the choice of the base theory $T$;
\item the choice of a provability predicate;
    \item the choice of an arithmetic formula to express consistency;
      \item the choice of a numbering;
      \item the choice of a specific formula numerating the axiom set of $T$.
\end{enumerate}

These factors are not independent of each other, and
a choice made at an earlier stage may have influences on the choices
made at a later stage.
In the following, when we discuss how  $\sf G2$ depends on one factor,  we always assume that other factors are fixed as in the default assumptions we make  and only the factor we are discussing is varied.
For example,  Visser \cite{Visser 11}  rests on fixed choices for (1) and (3)-(5) but varies the choice of (2); Grabmayr \cite{Grabmayr 18} rests on fixed choices for (1)-(2) and (4)-(5) but varies the choice of (3); Feferman \cite{Feferman 60} rests on fixed choices for (1)-(4)   but varies the choice of (5).

In the following, we give a brief discussion of how $\sf G2$ depends on the above five factors.
For more discussions of these factors, we refer to \cite{Cheng 19-2}.

``Whether $\sf G2$ holds for $T$" depends on the choice of the base theory.
An foundational question about $\sf G2$ is: how much of information about arithmetic is required for the proof of $\sf G2$.
If the base theory does not contain enough information about arithmetic,  then $\sf G2$ may fail in the sense that the consistency statement is provable in the base theory.
Willard \cite{Willard 06}  explored the generality and boundary-case
exceptions of $\sf G2$ under some base theories.
Willard constructed examples of recursively enumerable  arithmetical theories that couldn't prove the totality of successor function but could prove their own canonical consistency (see \cite{Willard 01,Willard 06}).
Pakhomov \cite{Fedor Pakhomov} defined a theory $H_{<\omega}$ and showed that it proves its own canonical consistency.
Unlike Willard's theories, $H_{<\omega}$ isn't an arithmetical theory but a theory formulated in the
language of set theory with an additional unary function.

``Whether $\sf G2$ holds for $T$" depends on the definition of provability predicate. Recall that $T$ is a recursively axiomatizable consistent extension of $\mathbf{Q}$.
Being a consistency statement is not an absolute concept but a role w.r.t.~ a choice of the provability predicate.
Note that $\sf G2$ holds for any standard provability predicate in the sense that if provability predicate $\mathbf{Pr}_{T}(x)$ is standard, then $T\nvdash \neg \mathbf{Pr}_T(\ulcorner \mathbf{0}\neq \mathbf{0}\urcorner)$.
However, $\sf G2$ may fail for some non-standard provability predicates. Rosser provability predicate is an important kind of non-standard provability predicate in the study of meta-mathematics of arithmetic.
Define the Rosser provability predicate $\mathbf{Pr}^R_{T}(x)$ as the formula $\exists y(\mathbf{Prf}_{T}(x,y)\wedge \forall z\leq y\neg \mathbf{Prf}_{T}(\dot{\neg}(x),z))$.\footnote{$\dot{\neg}$ is a function symbol expressing a primitive recursive function calculating the code of $\neg \phi$ from the code of $\phi$.} Define the consistency statement $\mathbf{Con}^R(T)$ via Rosser provability predicate as $\neg \mathbf{Pr}^R_{T}(\ulcorner \mathbf{0}\neq \mathbf{0}\urcorner)$. Then $\sf G2$ fails for Rosser provability predicate: $T\vdash\mathbf{Con}^R(T)$.

``Whether $\sf G2$  holds for $T$" depends on the choice of arithmetic formulas to express consistency. We have different ways to express the consistency of  $T$.
The canonical arithmetic formula to express the consistency of  $T$ is $\mathbf{Con}(T)\triangleq\neg \mathbf{Pr}_T(\ulcorner \mathbf{0}\neq \mathbf{0}\urcorner)$.
Another way to express the consistency of  $T$ is
$\mathbf{Con}^0(T) \triangleq \forall x(\mathbf{Fml}(x) \wedge \mathbf{Pr}_T(x) \rightarrow \neg \mathbf{Pr}_T(\dot{\neg} x))$.\footnote{$\mathbf{Fml}(x)$ is the formula which represents the relation that $x$ is a code of a formula.}

Kurahashi \cite{Rosser provability and G2} constructed a Rosser provability predicate such that $\sf G2$ holds for the consistency statement formulated via $\mathbf{Con}^0(T)$ (i.e.~ the consistency statement formulated via $\mathbf{Con}^0(T)$ and Rosser provability predicate is not provable in $T$), but $\sf G2$ fails for the consistency statement formulated via $\mathbf{Con}(T)$ (i.e.~ the consistency statement formulated via $\mathbf{Con}(T)$ and Rosser provability predicate is  provable in $T$).

``Whether $\sf G2$ holds for $T$" depends on the choice of numberings.
Any injective function $\gamma$
from a set of $L(\mathbf{PA})$-expressions to $\omega$ qualifies as a numbering.
G\"{o}del's numbering is a special kind of numberings under which the G\"{o}del number of the set of axioms of $\mathbf{PA}$ is recursive.
Grabmayr \cite{Grabmayr 18} showed that $\sf G2$ holds for acceptable numberings; But $\sf G2$  fails for some non-acceptable numberings.\footnote{For the definition of acceptable numberings, we refer to Grabmayr \cite{Grabmayr 18}.}

Finally,
``Whether $\sf G2$ holds for $T$" depends on the numeration of $T$.  As a generalization, $\sf G2$ holds for any $\Sigma^0_1$ numeration of $T$: if $\alpha(x)$ is a $\Sigma^0_1$  numeration of $T$, then $T\nvdash \mathbf{Con}_{\alpha}(T)$.
 However, $\sf G2$ fails for some $\Pi^0_1$ numerations of $T$.
For example, Feferman \cite{Feferman 60} constructed a $\Pi^0_{1}$ numeration $\tau(u)$ of $T$ such that $\sf G2$ fails under this numeration: $T\vdash \mathbf{Con}_{\tau}(T)$.

\end{document}